\newcommand{\floor}[1]{\left\lfloor #1 \right\rfloor}
\def\topp#1{^{(#1)}}
\def\vv#1{{\boldsymbol #1}}
\def\fddto{\stackrel{\rm f.d.d.}{\longrightarrow}}
 \numberwithin{equation}{section}
\newcommand{\commenthide}[1]{}
\newcommand{\eps}{\varepsilon}
\newcommand{\toD}{\xrightarrow{\mathcal{D}}}
\newcommand{\bin}[2]{\left(\begin{matrix}#1\\#2\end{matrix}\right)}
\newcommand{\calS}{\mathcal{S}}
\newcommand{\calM}{\mathcal{M}}
\newcommand{\calN}{\mathcal{N}}
\newcommand{\calL}{\mathcal{L}}
\newcommand{\RR}{\mathbb{R}}
\newcommand{\NN}{\mathbb{N}}
      \newtheorem{theorem}{Theorem}[section]
       \newtheorem{proposition}[theorem]{Proposition}
       \newtheorem{lemma}[theorem]{Lemma}
       \newtheorem{remark}{Remark}[section]
\theoremstyle{definition}
\def\prodd#1#2#3{\prod_{#1=#2}^{#3}}
\def\summ#1#2#3{\sum_{#1=#2}^{#3}}
\def\abs#1{\left|#1\right|}
\def\ccbb#1{\left\{#1\right\}}
\def\pp#1{\left(#1\right)}
\def\floor#1{\left\lfloor #1 \right\rfloor}
\def\vv#1{{\boldsymbol #1}}
\date{Created Monday, Feb 10, 2018. File \jobname.tex. Printed: \today}
\title{Fluctuations of random Motzkin paths}
\author{W\l odzimierz Bryc}
\address
{
W\l odzimierz Bryc\\
Department of Mathematical Sciences\\
University of Cincinnati\\
2815 Commons Way\\
Cincinnati, OH, 45221-0025, USA.
}
\email{wlodzimierz.bryc@uc.edu}
\author{Yizao Wang}
\address
{
Yizao Wang\\
Department of Mathematical Sciences\\
University of Cincinnati\\
2815 Commons Way\\
Cincinnati, OH, 45221-0025, USA.
}
\email{yizao.wang@uc.edu}
\keywords{Motzkin path, scaling limit; phase transition; non-crossing pair partition; Brownian excursion; free Brownian motion; free probability; Laplace transform}
\subjclass[2010]
{60F05; 
60K35} 
\begin{document}\sloppy
\begin{abstract}
It is  known that after scaling a random Motzkin path converges %
in distribution
to a Brownian excursion.
We prove that the fluctuations of the counting processes of the ascent steps, the descent steps and the level steps converge jointly to linear combinations of two independent processes:
a Brownian motion and a Brownian excursion. The  proofs rely on the Laplace transforms and
 an integral representation based on an identity connecting non-crossing pair partitions and joint moments of %
an explicit non-homogeneous Markov process.
\end{abstract}
\maketitle

\section{Introduction}

 Recall that a Motzkin path is a lattice path from $(0,0)$ to $(n,0)$ which does not fall below the  horizontal axis, and uses only the {\em ascents} $(1,1)$, the
 {\em descents} $(1,-1)$ or the {\em level steps} $(1,0)$. (Other authors use terminology ``up steps", ``down steps", and ``horizontal" steps -- here we follow terminology in \citet[page 319]{flajolet09analytic}.)
Each such path is uniquely described by the sequence $\gamma_n=(\eps_1,\dots,\eps_n)$ with $\eps_j\in\{0,\pm 1\}$ which
determine the directions of consecutive steps along the vertical axis.
The cardinality of the set $\calM_n$ of all Motzkin paths, known as the Motzkin number and denoted by $M_n$, %
is   related to the
Catalan numbers $C_k$
 by the formula %
\[%
M_n=\sum_{k=0}^{\lfloor n/2 \rfloor}\bin{n}{2k}C_k, %
\mbox{ where }
C_k = \frac1{k+1}\bin{2k}k.
\]%
 \begin{figure}[H]

  \begin{tikzpicture}[scale=.8]

 \draw[->] (0,0) to (0,3);
 \draw[->] (0,0) to (11,0);
\draw[-,thick] (0,0) to (1,0);
\draw[-,thick] (1,0) to (2,1);
\draw[-,thick] (2,1) to (3,0);
\draw[-,thick] (3,0) to (4,1);
\draw[-,thick] (4,1) to (5,1);
\draw[-,thick] (5,1) to (6,2);
\draw[-,thick] (6,2) to (7,1);
\draw[-,thick] (7,1) to (8,0);
\draw[-,thick] (8,0) to (9,1);
\draw[-,thick] (9,1) to (10,0);

   \node[below] at (1,0) {  $1$};
      \node[below] at (2,0) {  $2$};
       \node[below] at (3,0) {  $3$};
        \node[below] at (4,0) {  $4$};
         \node[below] at (5,0) {  $5$};
          \node[below] at (6,0) {  $6$};
           \node[below] at (7,0) {  $7$};
            \node[below] at (8,0) {  $8$};
             \node[below] at (9,0) {  $9$};
            \node[below] at (10,0) {  $10$};

\end{tikzpicture}

\caption{  \label{Fig1}   Motzkin path $\gamma_{10}=(0,1,-1,1,0,1,-1,-1,1,-1)$, drawn as a linear interpolation.
For this path, the number of level steps
$L_{10}(1)=2$ and the number of ascents $A_{10}(1)=4$.}
\end{figure}
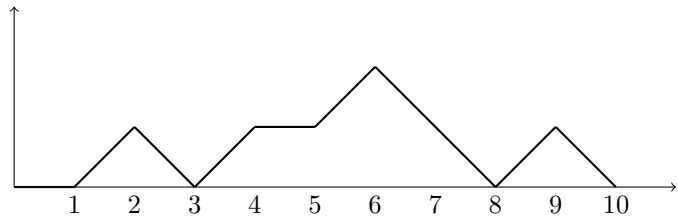

Consider now a random Motzkin path $\gamma_n$  selected uniformly from the set $\calM_n$  of all Motzkin paths of length $n$. Then $(\eps_1,\dots,\eps_n)$
becomes a sequence of random variables with the distribution
that coincides with the distribution of a random walk with independent increments that take values $-1,0,1$ with probability $1/3$ each, conditioned on  staying in the upper
 quadrant and landing at $0$ at time $n$. The general result of \citet{kaigh76invariance} specialized to this setting implies that random process
 $$
\left(\frac{\sqrt{3}}{\sqrt{2n}}\sum_{k=1}^{\lfloor nt\rfloor}\eps_k\right)_{t\in[0,1]}
 $$
converges %
in distribution
 to the Brownian excursion $(B_t^{ex})_{t\in[0,1]}$. %
 Recall that  Brownian excursion is a nonhomogeneous Markov process with explicit transitions which can be interpreted as the
Brownian bridge conditioned to stay strictly positive until time $t=1$.
See for example %
\citep{revuz99continuous,yen13local} and the references therein for more background.

Here we take a closer look, and are in particular interested in
the asymptotic behavior of the three components that constitute a random Motzkin path:
 the counting process $\{A_n(t)\}_{t\in[0,1]}$ of the ascent steps,
 the counting process $\{D_n(t)\}_{t\in[0,1]}$ of  the descent steps, and
  the counting process $\{L_n(t)\}_{t\in[0,1]}$ of  the level steps.
  That is, for each Motzkin path
  $\gamma_n=(\eps_1,\dots,\eps_n)$ %
  write
\[
\eps^+_j = 1_{\ccbb{\eps_j = 1}},\quad \eps^-_j = 1_{\ccbb{\eps_j =-1}},\quad \delta_j  = 1_{\ccbb{\eps_j = 0}}
\]
and
consider three stochastic processes:
$$
A_n(t)=\sum_{k=1}^{\floor{nt}}\eps^+_k,\quad D_n(t)=\sum_{k=1}^{\floor{nt}}\eps^-_k, \quad L_n(t)=\sum_{k=1}^{\floor{nt}} \delta_k.
 $$
See Figure \ref{Fig1} for an illustration.
Clearly, $A_n(t)+D_n(t)+L_n(t)=\floor{nt}$ and the above mentioned consequence of  \citet{kaigh76invariance} can be rephrased as
$$\frac{1}{\sqrt{2n}}\left(A_n(t)-D_n(t)\right)_{t\in [0,1]}\toD \frac{1}{\sqrt{3}}(B_t^{ex})_{t\in[0,1]}$$
as $n\to\infty$.
Here, $\toD$ stands for the weak convergence in $D([0,1])$
with %
Skorohod
 topology.

Our main result is the following  component-wise description of the above convergence.
\begin{theorem}\label{T1}
The finite-dimensional distributions of the $\RR^3$-valued process
$$ \frac{1}{\sqrt{2n}}\left(A_n(t)-\frac{nt}{3}, L_n(t)-\frac{nt}{3},D_n(t)-\frac{nt}{3}\right)_{t\in[0,1]}
$$
converge to the finite-dimensional distributions of
$$\left(\frac{1}{2\sqrt{3}}B_t^{ex}+\frac{1}{6}B_t, -\frac{1}{3}B_t,  \frac{1}{6}B_t-\frac{1}{2\sqrt{3}}B_t^{ex}\right)_{t\in[0,1]}
, $$
where $(B_t)_{t\in[0,1]}$ is a Brownian %
motion, $(B_t^{ex})_{t\in[0,1]}$ is a Brownian excursion, and the   processes $(B_t)_{t\in[0,1]}$ and $(B_t^{ex})_{t\in[0,1]}$ are independent.
\end{theorem}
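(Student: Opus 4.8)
My plan is to pass to the two linear combinations $X_n(t)=A_n(t)-D_n(t)$ and $Y_n(t)=A_n(t)+D_n(t)$. Since $A_n+D_n+L_n=\floor{nt}$ exactly, one has $L_n(t)=\floor{nt}-Y_n(t)$ together with the identities $A_n-\tfrac{nt}{3}=\tfrac12 X_n+\tfrac12(Y_n-\tfrac{2nt}{3})$, $D_n-\tfrac{nt}{3}=-\tfrac12 X_n+\tfrac12(Y_n-\tfrac{2nt}{3})$ and $L_n-\tfrac{nt}{3}=-(Y_n-\tfrac{2nt}{3})+O(1)$. Substituting these into the claimed limit shows that Theorem~\ref{T1} is equivalent to the joint convergence, in the sense of finite-dimensional distributions, of $\tfrac{1}{\sqrt{2n}}X_n(t)$ to $\tfrac{1}{\sqrt3}B_t^{ex}$ and of $\tfrac{1}{\sqrt{2n}}(Y_n(t)-\tfrac{2nt}{3})$ to $\tfrac13 B_t$, with $B^{ex}$ and $B$ independent. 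The first marginal is precisely the consequence of \citet{kaigh76invariance} quoted above, so the genuinely new work is to produce a Brownian motion out of $Y_n$ and to establish the independence.

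The engine I would use is the classical decomposition of a Motzkin path into its level steps and an underlying Dyck path: deleting the level steps never affects nonnegativity, so a Motzkin path of length $n$ with $2k$ non-level steps corresponds bijectively to a choice of the $n-2k$ level positions together with a Dyck path of semilength $k$ filling the remaining positions. Consequently, if $K_n$ denotes the number of ascents, then $P(K_n=k)=\binom{n}{2k}C_k/M_n$, and \emph{conditionally on $K_n=k$} the set $S$ of level positions is a uniform $(n-2k)$-subset of $\{1,\dots,n\}$, the filling Dyck walk $(W_m)_{m=0}^{2k}$ is a uniform Dyck path, and $S$ and $W$ are independent. In these terms $Y_n(t)=\#\{\,i\le\floor{nt}:i\notin S\,\}$ is a functional of $(K_n,S)$ alone, while $X_n(t)=W_{Y_n(t)}$ is the Dyck walk run under the random time change $Y_n$.

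To analyze $Y_n$ I would split
\[
Y_n(t)-\frac{2nt}{3}=\Big(Y_n(t)-2K_n\frac{\floor{nt}}{n}\Big)+\Big(2K_n\frac{\floor{nt}}{n}-\frac{2nt}{3}\Big),
\]
where the first bracket is the sampling-without-replacement fluctuation given $K_n$ and the second is asymptotically $2t(K_n-\tfrac n3)$. For the sampling term, given $K_n=k$ with $2k/n\to\tfrac23$, the centered partial-sum process of a uniform subset converges after division by $\sqrt n$ to $\sqrt{\rho(1-\rho)}\,B^{br}$ with $\rho=\tfrac23$, i.e.\ to $\tfrac{\sqrt2}{3}B^{br}$ for a standard Brownian bridge $B^{br}$; after the extra factor $1/\sqrt2$ this contributes $\tfrac13 B^{br}_t$. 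For the second term I need a central limit theorem $\tfrac1{\sqrt n}(K_n-\tfrac n3)\to N$ with $N\sim\mathcal N(0,\tfrac1{18})$, which I would obtain from a saddle-point/local-limit analysis of $\binom{n}{2k}C_k$ using $C_k\sim 4^k k^{-3/2}/\sqrt\pi$ and $M_n\sim 3^{n}\,c\,n^{-3/2}$; it produces the linear term $\tfrac13 t\,\widehat N$ with $\widehat N$ standard normal. Since the bridge limit is insensitive to the $O(\sqrt n)$ variation of $K_n$, these two contributions are asymptotically independent, and the elementary identity that a Brownian bridge plus an independent $t$-linear Gaussian, $B_t:=B^{br}_t+t\widehat N$, is a standard Brownian motion then yields $\tfrac1{\sqrt{2n}}(Y_n-\tfrac{2nt}{3})\to\tfrac13 B_t$ with exactly the right variance.

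Finally, the independence of $B$ and $B^{ex}$ and the identification of the excursion come from the conditional independence of $W$ and $(K_n,S)$. The excursion $B^{ex}$ is a functional of the Dyck \emph{shape} $W$, whose scaling limit is the same for every admissible $k$, so it is asymptotically independent of the pair $(K_n,S)$ that produces $B$. The only coupling between the two components is the random time change in $X_n(t)=W_{Y_n(t)}$; but $Y_n(t)-\tfrac{2nt}{3}=O_P(\sqrt n)$ moves the argument of the diffusively scaling walk $W$ by $O_P(\sqrt n)$, changing $W$ by $O_P(n^{1/4})=o_P(\sqrt n)$, so that $X_n(t)=W_{\floor{2nt/3}}+o_P(\sqrt n)$ and the limit of $\tfrac1{\sqrt{2n}}X_n$ is governed by $W$ through the \emph{deterministic} time change alone. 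I would then assemble everything by showing that the joint characteristic function of $\big(\tfrac1{\sqrt{2n}}X_n(t_j),\,\tfrac1{\sqrt{2n}}(Y_n(t_j)-\tfrac{2nt_j}{3})\big)_j$ factorizes, conditioning on $K_n$ and using $W\perp S$. I expect the main obstacle to be exactly this joint step: controlling the time-change coupling uniformly and proving the asymptotic independence of the Dyck shape from $(K_n,S)$ at several times at once, together with the bookkeeping that pins down the variance $\tfrac1{18}$ of the $K_n$-fluctuation. An alternative, and the route signalled in the abstract, is to bypass the decomposition and compute the joint Laplace transform of the increments directly through the integral and non-crossing-pair-partition representation, reading off both the covariance structure and the excursion from the limiting transform.
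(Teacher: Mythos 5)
Your proposal is correct in outline, but it is \emph{not} the route the paper's actual proof takes: the paper proves Theorem \ref{T1} by reducing it to Theorem \ref{PT3} and computing the joint Laplace transform of the increments of $(2A_n+L_n-\lfloor ns\rfloor,\,3L_n-\lfloor ns\rfloor)/\sqrt{2n}$ through an exact integral representation (Proposition \ref{P3}, built on the non-crossing pair partition identity of Lemma \ref{L:free-Wick}) over the finite-dimensional distributions of the free Brownian motion, then does asymptotics on that integral and identifies the limit with the product of the Brownian-motion and Brownian-excursion transforms, concluding via the Laplace-transform convergence lemma (Lemma \ref{L3}). What you propose instead --- conditioning on the number $K_n$ of ascents, exploiting the bijection (level positions $S$) $\times$ (Dyck path $W$) with $S\perp W$ given $K_n$, a local CLT for $K_n$ with variance $n/18$, the hypergeometric-sampling bridge for $Y_n$ given $K_n$, the identity $B^{br}_t+t\widehat N=B_t$, Kaigh's invariance principle for $W$, and the $o_P(\sqrt n)$ time-change argument for $X_n(t)=W_{Y_n(t)}$ --- is precisely the probabilistic approach that the paper itself only \emph{sketches} in Section \ref{sec:proba}, and your version fills in more of its structure (in particular the variance bookkeeping checks out: $\mathrm{Var}(K_n)\approx n/18$ is consistent with the paper's warmup result $\mathrm{Var}(L_n(1))\approx 2n/9$, and your linear-algebra reduction reproduces the exact coefficients of Theorem \ref{T1}). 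The trade-off: your route is conceptually transparent and explains \emph{why} the limit decomposes into an independent excursion and Brownian motion, but it leans on several external ingredients (Kaigh's theorem, a sampling invariance principle, a saddle-point local CLT) and requires careful uniform control of the conditional limits and the time-change coupling, which you correctly flag as the main obstacles still to be discharged; the paper's Laplace-transform route is more computational and less illuminating probabilistically, but it is self-contained, handles the joint convergence and independence in one stroke through an explicit limiting transform, and is designed to showcase a method that extends to harder models (ASEP with open boundaries) where the clean combinatorial decomposition you use is not available.
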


This
result can be established by different methods. For example, a probabilistic approach is sketched in Section \ref{sec:proba}.
The main purpose of this paper is, by proving Theorem \ref{T1}, however to demonstrate a method that was recently introduced  in our investigation \citep{bryc17asymmetric,bryc17limit} of
asymmetric simple
 exclusion process (ASEP) with open boundary \citep{derrida06matrix}. One of the key ideas therein is to establish an identity connecting the Laplace transform of the statistics of interest,
 essentially the moment generating functions
 of the particles, to the expectation of a functional of a certain inhomogeneous Markov process with explicit transition density functions. The expectation, in the form of an integral representation, makes it possible to compute the asymptotic Laplace transform and then to characterize the limit distribution, although the computation in \citep{bryc17limit} is quite involved.

It is not surprising that the same idea can be applied to the Motzkin-path model considered here, as intrinsic connections between Motzkin paths and ASEP have been well known and explored in earlier research (e.g.~\citep{blythe09continued,brak06combinatorial,brak04asymmetric,corteel07tableaux,corteel11matrix,woelki13parallel}). In particular, the model on Motzkin paths that we considered here is simpler in the sense that the corresponding identity between the generating functions of the counting processes, and the so-called {\em free Brownian motion}, an inhomogeneous Markov process, is more straightforward (see Propositions \ref{P1} and \ref{P3} below) than in the ASEP example. Once the identity is established, the asymptotic limit is then obtained by a straightforward calculation, which is also simpler than in the ASEP example.

 \begin{remark} %
The result in Theorem \ref{T1} itself might be known, although we could not find a reference.
A similar phenomenon as in Theorem \ref{T1} has been described and explained for the steady state of ASEP with open boundary by
 Derrida, Enaud and Lebowitz
 \citep[Section 2.5]{derrida04asymmetric}, where the fluctuations of height functions can also be decomposed into linear combinations of a Brownian motion and a Brownian excursion,
 the two being independent.
\end{remark}

The paper is organized as follows.
In Section \ref{Sect-Sulanke} for pedagogical purposes we  give a simple
integral representation for the generating function of the level steps,    and prove  that finite-dimensional distributions of
  $\frac{1}{\sqrt{2n}}(3L_n(t)-nt)_{t\in[0,1]}$ converge to the finite-dimensional distributions of the Brownian motion $(B_t)_{t\in[0,1]}$.
In Section \ref{Sect:Proofs} we derive a more general integral representation for the joint generating functions that
is needed for the proof of  Theorem \ref{T1}. In Section \ref{Sec:RM} we collect some additional comments and remarks.

\section{Warmup: fluctuations of level steps}\label{Sect-Sulanke}
The  proof that  the
counting process of level steps $(L_n(t))_{0\leq t\leq 1}$  is asymptotically a Brownian motion relies on fewer
technicalities, so we present it separately as an introduction to our approach. The proof of Theorem \ref{T1} presented in
Section \ref{Sect:Proofs} is self-contained and  covers this case.

Recall our notation $\delta_k$ for the indicators of the level steps, and consider the probability generating function
$$
\varphi(\vv u)=\sum_{\gamma_n\in\calM_n}\prod_{j=1}^n u_j^{\delta_j}, %
\quad
\vv{u}=(u_1,\dots,u_n),
$$
for the locations of level steps.
We have the following integral representation for $\varphi(\vv u)$ that uses the Wigner semicircle law %
$(2\pi)^{-1}\sqrt{4-y^2}dy$ supported on $[-2,2]$.
\begin{proposition}\label{P1} For $n=1,2,\dots$,
\begin{equation}
  \label{GenF0}
\varphi(\vv u)=\frac{1}{2\pi}\int_{-2}^2 \prod_{j=1}^n(u_j+y)\sqrt{4-y^2}dy.
\end{equation}
\end{proposition}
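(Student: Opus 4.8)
The plan is to prove the identity by expanding both sides as polynomials in $u_1,\dots,u_n$ and comparing the coefficient of each squarefree monomial $\prod_{j\in S}u_j$, where $S$ ranges over subsets of $\ccbb{1,\dots,n}$. Both sides are visibly multilinear in $\vv u$: on the left because each $\delta_j\in\ccbb{0,1}$, and on the right because $\prodd{j}{1}{n}(u_j+y)$ has degree one in each $u_j$. So it suffices to match these coefficients subset by subset.

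First I would treat the right-hand side. Expanding the product gives
$$\prodd{j}{1}{n}(u_j+y)=\sum_{S\subseteq\ccbb{1,\dots,n}}\pp{\prod_{j\in S}u_j}\,y^{\,n-\abs S},$$
so after integrating term by term, the coefficient of $\prod_{j\in S}u_j$ on the right is the moment $\frac1{2\pi}\int_{-2}^2 y^{\,n-\abs S}\sqrt{4-y^2}\,dy$ of the Wigner semicircle law. I would then recall (or verify by the standard recursion) that the even moments of the semicircle law are the Catalan numbers, $\frac1{2\pi}\int_{-2}^2 y^{2k}\sqrt{4-y^2}\,dy=C_k$, while the odd moments vanish. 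Thus the coefficient of $\prod_{j\in S}u_j$ on the right equals $C_{(n-\abs S)/2}$ when $n-\abs S$ is even and $0$ otherwise.

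Next I would identify the same coefficient on the left combinatorially. Since $\delta_j=1_{\ccbb{\eps_j=0}}$, the monomial contributed by a path $\gamma_n$ is exactly $\prod_{j\,:\,\eps_j=0}u_j$; hence the coefficient of $\prod_{j\in S}u_j$ in $\varphi(\vv u)$ counts the Motzkin paths whose level steps occupy \emph{precisely} the positions in $S$. The crucial observation is that a level step leaves the height unchanged, so the height of the path after step $j$ equals the partial sum of the $\pm1$-steps placed at the non-level positions $\ccbb{1,\dots,n}\setminus S$. Consequently the requirements that $\gamma_n$ stay nonnegative and return to $0$ translate \emph{exactly} into the requirement that the $\pm1$-steps at the positions of $\ccbb{1,\dots,n}\setminus S$ form a Dyck path. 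The number of such completions is therefore the number of Dyck paths of length $n-\abs S$, namely $C_{(n-\abs S)/2}$ if $n-\abs S$ is even and $0$ otherwise.

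Comparing the two coefficient computations finishes the proof. The only genuine content is the height-reduction step in the third paragraph: one must check that \emph{inserting} level steps at an arbitrary prescribed set of positions into any Dyck path yields a valid Motzkin path, and conversely that deleting the level steps of any Motzkin path returns a Dyck path, so that the correspondence is a bijection preserving the position set $S$. I expect this bijection, together with recalling the Catalan moments of the semicircle law, to be the main (and essentially only) obstacle; everything else is bookkeeping of coefficients.
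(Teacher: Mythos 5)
Your proof is correct and takes essentially the same approach as the paper's: both rest on the bijective decomposition of a Motzkin path into the positions of its level steps plus a Dyck path on the complementary positions, combined with the fact that the even moments of the semicircle law are the Catalan numbers and the odd moments vanish. The only difference is presentational --- you compare coefficients of the squarefree monomials $\prod_{j\in S}u_j$ on the two sides, while the paper writes $\varphi(\vv u)$ as a sum over subsets and reassembles it directly into the integral.
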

\begin{proof}
Each Motzkin path of $n$ steps decomposes uniquely into a Dyck path of $2k$ steps (with ascents and descents only) and $n-2k$ level steps. Thus,
$\gamma_n\in\calM_n$ partitions set $\{1,\dots,n\}$ into the set  $S$ of non-level steps and its complement $S^c$, where the
level steps occur. If cardinality $|S|$ of $S$ is $2k$, then there are %
in total $C_k$ different
 Dyck paths over $S$.  This gives
\[%
\varphi(\vv u)=\sum_{\substack{S\subset\{1,\dots,n\}\\ |S|\in2\NN}} C_{|S|/2}\prod_{j\not\in S}u_j.
\]%
 Since the even moments of the semicircle law are the Catalan numbers, and the odd moments are zero,  see e.g.~ \citet[page 24]{hiai00semicircle}, %
 we can now write this sum over all subsets $S$. We get
$$
\varphi(\vv u)=\sum_{S\subset\{1,\dots,n\}}   \frac{1}{2\pi}\int_{-2}^2 y^{|S|}\sqrt{4-y^2}dy \prod_{j\not\in S}u_j=\frac{1}{2\pi}\int_{-2}^2 \prod_{j=1}^n(u_j+y)\sqrt{4-y^2}dy.
$$
\end{proof}

We can now prove the convergence of the middle component in Theorem \ref{T1}, showing that  $L_n(t)$ behaves just like the sum of independent Bernoulli   random variables
with   probability of success $1/3$.
\begin{proposition}
As $n\to\infty$,
\[
\pp{\frac{3L_n(t)-nt}{\sqrt{2n}}}_{t\in[0,1]}
\fddto
\pp{B_t}_{t\in[0,1]},
\]
where $\fddto$ denotes convergence of %
finite-dimensional
distributions.
\end{proposition}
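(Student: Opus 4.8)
The plan is to compute the joint moment generating function of the level counts and identify its limit through Proposition \ref{P1}. Fix times $0=t_0<t_1<\dots<t_d\le 1$ and reals $\lambda_1,\dots,\lambda_d$. Since $L_n(t_i)=\sum_{j=1}^{\floor{nt_i}}\delta_j$, interchanging the order of summation gives $\sum_{i=1}^d\lambda_i L_n(t_i)=\sum_{j=1}^n c_j\delta_j$, where $c_j:=\sum_{i:\,\floor{nt_i}\ge j}\lambda_i$ is the step function equal to $\Lambda_m:=\lambda_m+\dots+\lambda_d$ on $\floor{nt_{m-1}}<j\le\floor{nt_m}$. Choosing $u_j=\exp(3c_j/\sqrt{2n})$, the normalized generating function $\varphi(\vv u)/\varphi(\vv 1)$ equals $\mathbb E\exp\big(\tfrac{3}{\sqrt{2n}}\sum_i\lambda_i L_n(t_i)\big)$, so by Proposition \ref{P1} the moment generating function of $\sum_i\lambda_i\big(3L_n(t_i)-nt_i\big)/\sqrt{2n}$ is
\[
\exp\!\Big(-\tfrac{1}{\sqrt{2n}}\textstyle\sum_i\lambda_i nt_i\Big)\,
\frac{\int_{-2}^2\prod_{j=1}^n(e^{3c_j/\sqrt{2n}}+y)\sqrt{4-y^2}\,dy}{\int_{-2}^2(1+y)^n\sqrt{4-y^2}\,dy}.
\]

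The second step is a Laplace-type asymptotic analysis of this ratio. Writing $a_j:=e^{3c_j/\sqrt{2n}}-1=O(n^{-1/2})$ and expanding
\[
\log\prod_{j=1}^n(1+y+a_j)=n\log(1+y)+\frac{\sum_j a_j}{1+y}-\frac{\sum_j a_j^2}{2(1+y)^2}+O\!\Big(\textstyle\sum_j a_j^3\Big),
\]
I would show that both integrals concentrate near $y=2$, where $1+y=3$ attains its maximum on $[-2,2]$, the effective width of the peak being $O(1/n)$. Evaluating the slowly varying correction factor at $y=2$ and keeping terms through order $n^{-1}\sum_j c_j^2=O(1)$, the ratio is asymptotic to $\exp\big(\tfrac{1}{\sqrt{2n}}\sum_j c_j+\tfrac{1}{2n}\sum_j c_j^2\big)$. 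Because $\sum_i\lambda_i\floor{nt_i}=\sum_j c_j$ and $\tfrac{1}{\sqrt{2n}}\sum_i\lambda_i(nt_i-\floor{nt_i})\to 0$, the centering factor cancels the linear term, leaving $\exp\big(\tfrac{1}{2n}\sum_j c_j^2\big)$.

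Finally I would pass to the limit in the surviving quadratic term. Since $c_j$ takes the value $\Lambda_m$ on an interval of length $\approx n(t_m-t_{m-1})$, the Riemann sum converges, $\tfrac1n\sum_{j=1}^n c_j^2\to\sum_{m=1}^d(t_m-t_{m-1})\Lambda_m^2$, so the moment generating function tends to $\exp\big(\tfrac12\sum_{m=1}^d(t_m-t_{m-1})\Lambda_m^2\big)$. This is precisely the moment generating function of $\sum_i\lambda_i B_{t_i}$, because $\sum_i\lambda_i B_{t_i}=\sum_m\Lambda_m(B_{t_m}-B_{t_{m-1}})$ has variance $\sum_m(t_m-t_{m-1})\Lambda_m^2$. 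As every moment generating function in play is finite (the summands being bounded), pointwise convergence of the moment generating functions in a neighborhood of the origin to the finite Gaussian limit yields convergence of the finite-dimensional distributions.

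The hard part will be making the Laplace asymptotics rigorous: uniformly controlling the product expansion, justifying that the ratio of integrals localizes at $y=2$, and verifying that the $y$-variation of the correction factor across a peak of width $O(1/n)$ is negligible. The latter is the delicate point, since the exponent $\sum_j a_j/(1+y)$ of that factor is of order $\sqrt n$ yet varies by only $O(1/n)\cdot O(\sqrt n)=O(n^{-1/2})$ over the peak. Tracking the quadratic term to the exact constant $\tfrac12$ also requires carrying the second-order terms of both the exponential expansion of $a_j$ and the logarithmic expansion consistently.
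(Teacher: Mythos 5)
Your proposal is correct and is essentially the paper's own approach: both compute the joint moment generating function via the integral representation of Proposition \ref{P1} (your block weights $\Lambda_m$ play exactly the role of the paper's increment weights $w_k$), localize the integral at $y=2$ on the scale $2-y=O(1/n)$, and identify the Gaussian limit through pointwise convergence of everywhere-finite MGFs. The technical work you defer as ``the hard part'' is resolved in the paper by the substitution $y=2-v^2/(2n)$ followed by dominated convergence (the transformed integrand is bounded by a constant times $v^2e^{-v^2/6}$ and converges pointwise), with the known asymptotics $M_n\sim 3^{n+3/2}/(2\sqrt{\pi}\,n^{3/2})$ used in place of your denominator integral --- a cleaner device than uniformly controlling the logarithmic expansion of the product over the $O(1/n)$ peak.
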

\begin{proof}
 Fix  $s_0=0<s_1<\dots<s_d<s_{d+1}=1$.
 Since $L_n(0)=0$, it suffices to prove  that the $(d+1)$-dimensional vector of increments
 \[
 \Delta_k\topp n=L_n(s_k)-L_n(s_{k-1}), k=1,\dots,d+1
 \] converges in distribution to
 the corresponding increments of the Brownian motion.

We use formula \eqref{GenF0} to deduce an integral representation for the Laplace transform of
$(\Delta_1,\dots,\Delta_{d+1})$. Denote  $n_k=\floor{ns_k}-\floor{ns_{k-1}}$, starting with $n_1=\floor{ns_1}$ and ending with
$n_{d+1}=n-\floor{n s_d}$.
Splitting the sum into the consecutive blocks
$\calN_k=\left\{j\in\NN: n_{k-1}<j\leq  n_{k}  \right\}$,
 we have
\begin{align}\label{Lap0}
  E \exp\pp{\sum_{k=1}^{d+1} w_k\Delta_k} & = \frac{1}{M_n} \sum_{\gamma\in\calM_n}\exp\pp{\sum_{k=1}^{d+1}w_k\sum_{j\in \calN_k}\delta_j}\\
\nonumber
&=\frac{1}{2\pi M_n}\int_{-2}^2 \prod_{k=1}^{d+1} (e^{w_k}+y)^{n_k}\sqrt{4-y^2}dy.
 \end{align}
For centering, it is more convenient to work with
$$G_n(t):=\frac1{\sqrt{2n}}(3L_n(t)- \floor{nt}),$$
which  is asymptotically equivalent to    $(3L_n(t)- nt)/\sqrt{2n}$.
With
\begin{equation}
  \label{u_{n,k}}
  u_{n,k}=e^{w_k/\sqrt{2n}}
\end{equation}
  we rewrite \eqref{Lap0}   as
\begin{multline}
  \label{Lap1} E \exp\pp{\sum_{k=1}^{d+1} w_k(G_n(s_k)-G_n(s_{k-1}))}
\\ =\frac{1}{2\pi M_n}\int_{-2}^2 \prod_{k=1}^{d+1} \pp{u_{n,k}^2+\frac y{u_{n,k}}}^{n_k}\sqrt{4-y^2}dy.
\end{multline}
  The asymptotic for Motzkin numbers $M_n$ is well known
\begin{equation}\label{M-growth}
  M_n\sim \frac{3^{n+3/2}}{2\sqrt{\pi}n^{3/2}},
\end{equation}
 see e.g.~\citet[Example VI.3 page 396]{flajolet09analytic} who consider $f_n=M_{n-1}$ so their asymptotic expression differs from \eqref{M-growth} by a factor of $3$. %
 Here and below, we write $a_n\sim b_n$ if $\lim_{n\to\infty} a_n/b_n = 1$.

 From now on, we concentrate on the asymptotics of the integral on the right-hand side of \eqref{Lap1}.
The first step is to discard the integral over $y<0$.  Since $w_1,\dots,w_{d+1}$ are fixed, %
we have, for every $k=1,\dots,d+1$,
$u_{n,k}\sim 1$.
If
$-2\leq y<0$ and $1/(1+\delta/2)<u<1+\delta/2$ %
for some    $0<\delta<1$,
then %
$$
\abs{u^2+\frac{y}{u}}\leq \max\ccbb{u^2,\frac 2u}<2+\delta<3
.
$$

So $$ \frac{1}{2\pi M_n}\left|\int_{-2}^0\prod_{k=1}^{d+1}\pp{u_{n,k}^2+\frac y{u_{n,k}}}^{n_k}\sqrt{4-y^2}dy\right|\leq
\frac{(2+\delta)^n}{M_n}\to 0.$$

To determine the asymptotic for the integral over $0<y<2$, mimicking  \citep{bryc17limit}  we substitute $y=2-v^2/(2n)$. We get
\begin{align}\label{f_n}
\frac1{2\pi}&\int_{0}^2 \prod_{k=1}^{d+1} \left(u_{n,k}^2+\frac{y}{u_{n,k}}\right)^{n_k}\sqrt{4-y^2}dy \\
& =\frac1{2\sqrt{2}\pi}\int_{0}^{2\sqrt{n}}\prod_{k=1}^{d+1} \left(u_{n,k}^2+\frac{2}{u_{n,k}}-\frac{v^2}{2nu_{n,k}}\right)^{n_k}\sqrt{4+\frac{v^2}{2n}}\frac{v^2}{n\sqrt{n}}dv \nonumber
\\ & =:\frac{3^n}{2\sqrt{2}\pi n^{3/2}}\int_{0}^{\infty} f_n(v) dv,\nonumber
\end{align}
with
\[
f_n(v) =
1_{\ccbb{v\leq 2\sqrt{n}}}\prod_{k=1}^{d+1} \left(\frac13\pp{u_{n,k}^2+\frac{2}{u_{n,k}}}-\frac{v^2}{6nu_{n,k}}\right)^{n_k}\sqrt{4+\frac{v^2}{2n}}v^2, v\ge0.
\]
We want to show $\lim_{n\to\infty}\int_0^\infty f_n(v)dv = \int_0^\infty\lim_{n\to\infty} f_n(v)dv$.
To do so, we first verify that functions $f_n(v)$ are dominated by an
integrable function.
Since $e^{2x}+2e^{-x}\geq 3$, for any real $w$ and  $0<\delta<1/2$ we can choose $N(w,\delta)$  such that
for   $n\geq N(w,\delta)$ we have
$1/(1+\delta)<e^{w/\sqrt{2n}}<1+\delta$. Then for $0<v^2<4n$ we have
\begin{equation}
  \label{pos0}
  \frac13e^{2w/\sqrt{2n}}+\frac23e^{-w/\sqrt{2n}}-e^{-w/\sqrt{2n}}\frac{v^2}{6n}\geq
1-\frac{2(1+\delta)}{3}>0.
\end{equation}
For   $-1<x\leq 1/2$, we have $ e^{2x}+2e^{-x}\leq 3(1+2 x^2)$. So %
by $1+y\le e^y$ we get %
\[
 \frac13e^{2w/\sqrt{2n}}+\frac23e^{-w/\sqrt{2n}}-e^{-w/\sqrt{2n}}\frac{v^2}{6n}
\leq 1+\frac{w^2}{n}-\frac{1+\delta}{6n} v^2
\leq e^{w^2/n- v^2/(6n)}.
\]
Since the %
left-hand
 side of expression  \eqref{pos0}
 is non-negative,
 by the above bound its %
$n_k$-th
 power is bounded by
$\exp(\frac{n_k}{n}(w^2-v^2/6))$.  Applying this bound to the factors in $f_n(v)$ for a finite number of values of
 $w=w_1,\dots,w_{d+1}$
 and using the fact that $\sum n_k/n =1$
  we see that for large enough $n$
and $v^2\leq 4n$ we have
\begin{equation}\label{f1bound}
0\leq f_n(v)\leq \sqrt{4+\frac{v^2}{2n}} v^2 e^{\max_kw_k^2-v^2/6} \leq \sqrt{6}  e^{\max_kw_k^2}v^2e^{- v^2/6}
,
\end{equation}
 and the latter bound is valid for all $v$ as $f_n(v)=0$ for $v>2\sqrt{n}$.
This bound will justify the use of the dominated convergence theorem below.

It remains to compute the pointwise limit of $f_n(v)$. Recalling
\eqref{u_{n,k}},
we note that
$$\frac13\pp{u_{n,k}^2+\frac{2}{u_{n,k}}}\sim 1+\frac{w_k^2}{2n}+o\pp{\frac1n},
$$
and hence
$$
\lim_{n\to\infty}\left(\frac13\pp{u_{n,k}^2+\frac{2}{u_{n,k}}}-\frac{v^2}{6nu_{n,k}}\right)^{n_k}=
e^{(s_k-s_{k-1})w_k^2/2 -(s_k-s_{k-1}) v^2/6}.
$$
So
$$\lim_{n\to\infty} f_n(v)=
2\exp\pp{\frac12\sum_{k=1}^{d+1}(s_k-s_{k-1})w_k^2} v^2 e^{-{v^2}/6}.
$$
The factor of $2$ arises from $\sqrt{4+\frac{v^2}{2n}}$.
By the dominated convergence theorem,
\begin{align*}
\lim_{n\to\infty}\int_0^\infty f_n(v)dv & =2 %
\exp\pp{\frac12\sum_{k=1}^{d+1}(s_k-s_{k-1})w_k^2} \int_0^\infty v^2 e^{-v^2/6} dv
\\
&=3^{3/2} \sqrt{ 2 \pi}\exp\pp{\frac12\sum_{k=1}^{d+1}(s_k-s_{k-1})w_k^2}.
\end{align*}
So the right-hand side of \eqref{f_n} is asymptotically
$$
\frac{3^{n+3/2}}{2\sqrt{\pi}}\exp\pp{\frac12\sum_{k=1}^{d+1}(s_k-s_{k-1})w_k^2}.
$$
From \eqref{M-growth} we therefore get
$$\lim_{n\to\infty} E \exp\pp{\sum_{k=1}^{d+1} w_k(G_n(s_k)-G_n(s_{k-1}))}=\exp\pp{\frac12\sum_{k=1}^{d+1}(s_k-s_{k-1})w_k^2}
.$$
The right-hand side is the Laplace transform of the increments of a Brownian motion $(B_{s_k}-B_{s_{k-1}})_{k=1,\dots,d+1}$. This ends the proof.
\end{proof}

\section{Proof of Theorem \ref{T1}}\label{Sect:Proofs}

It will be convenient to re-state Theorem \ref{T1} using just two of the processes.
\begin{theorem}
  \label{PT3}
  The finite-dimensional distributions of the process
\begin{equation}
  \label{MultiVariate}
  \frac{1}{{\sqrt{2n}}}\left( 2A_n(t)+L_n(t)-nt,\;  3L_n(t)-nt \right)_{t\in[0,1]}
\end{equation}
  converge to the corresponding finite-dimensional distributions of  $(\frac{1}{\sqrt{3}}B_t^{ex}, B_t)_{t\in[0,1]}$, where
   $(B)_{t\in[0,1]}$ is the Brownian motion,   $(B_t^{ex})_{t\in[0,1]}$ is the Brownian excursion,
  and the two processes are independent.
\end{theorem}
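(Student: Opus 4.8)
The plan is to reduce the statement to a single joint Laplace-transform computation and then to carry out an edge-scaling asymptotic analysis parallel to that of Section~\ref{Sect-Sulanke}, the two new ingredients being a joint integral representation and the emergence of the Brownian excursion. The first step is to unwind the combinatorial meaning of the two coordinates. Since $2\eps^+_j+\delta_j=\eps_j+1$, we have $2A_n(t)+L_n(t)=\floor{nt}+\sum_{k\le\floor{nt}}\eps_k$, so the first coordinate equals $\frac{1}{\sqrt{2n}}(A_n(t)-D_n(t))$ up to the deterministic error $(\floor{nt}-nt)/\sqrt{2n}\to0$; it records the rescaled height of the path, while the second records the rescaled level-step count. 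Marginally, the height converges to $\frac{1}{\sqrt3}B^{ex}$ by \citet{kaigh76invariance} and the level count to $B$ by the warmup proposition of Section~\ref{Sect-Sulanke}, so the genuine content is the joint convergence together with the asymptotic independence of the two limits; both will be read off from the limiting Laplace transform, and the precise coordinate combination in \eqref{MultiVariate} is exactly the one in which the $B$-contributions cancel from the first coordinate.

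Next I would fix checkpoints $0=s_0<s_1<\dots<s_d<s_{d+1}=1$ with blocks $\calN_k$ as in the warmup and derive a joint integral representation that simultaneously tracks the ascent/descent pattern, hence the height increments $\sum_{j\in\calN_k}\eps_j$, and the level increments $\sum_{j\in\calN_k}\delta_j$. Assigning to a step in block $\calN_k$ the weight $e^{a_k}$, $e^{-a_k}$ or $u_k$ according as it is an ascent, a descent or a level step, the decomposition used in the proof of Proposition~\ref{P1}---a Motzkin path is a non-crossing matching of ascents to descents together with a set of level positions---expresses the generating function as a sum over non-crossing pairings weighted by the pair weights $e^{a_{k(i)}-a_{k(j)}}$ and over level sets weighted by $\prod u_\ell$. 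This is precisely the structure realized by the joint moments of the inhomogeneous Markov process associated with free Brownian motion; substituting its explicit transition densities, whose marginals are scaled Wigner semicircle laws, yields the desired representation of Proposition~\ref{P3}, generalizing \eqref{GenF0}.

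I would then insert the scalings $u_{n,k}=e^{w_k/\sqrt{2n}}$ for the level weights and $a_{n,k}=\theta_k/\sqrt{2n}$ for the height weights, discard the contribution of the region where the semicircle variables are negative by the bound used in the warmup, and zoom into the spectral edge via substitutions $y_k=2-v_k^2/(2n)$, one per checkpoint. This produces a finite-dimensional integral in the rescaled edge variables $v_k$, which encode the rescaled height at the checkpoints. The domination step, based on $e^{2x}+2e^{-x}\le3(1+2x^2)$ and $1+y\le e^y$, generalizes to an integrable majorant uniform in $n$, justifying dominated convergence; the pointwise limit of the integrand should factor into a Gaussian factor in the $w_k$ coming from the level steps and a factor in the $\theta_k$ and the $v_k$ that assembles the transition densities of the three-dimensional Bessel bridge, i.e.\ of the Brownian excursion.

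The main obstacle is this last identification: showing that the edge-scaled transition kernel of the process of Proposition~\ref{P3} converges, jointly over all checkpoints, to the Brownian-excursion finite-dimensional densities, and that the resulting limiting transform factors as a product of an excursion part in the $\theta_k$ and an independent Brownian-motion part in the $w_k$, with no surviving cross terms. The appearance of the excursion rather than an unconditioned Gaussian is forced by the vanishing of $\sqrt{4-y^2}$ at $y=2$, the same mechanism that in the warmup produced the weight $v^2e^{-v^2/6}$, but here it must be tracked through every checkpoint at once. Once the limiting transform is shown to equal $\exp\pp{\frac12\sum_{k=1}^{d+1}(s_k-s_{k-1})w_k^2}$ times the Laplace transform of the increments of $\frac{1}{\sqrt3}B^{ex}$, the claimed joint convergence and the independence of $B$ and $B^{ex}$ follow immediately, and Theorem~\ref{T1} is recovered from Theorem~\ref{PT3} by the linear change of coordinates afforded by $A_n(t)+D_n(t)+L_n(t)=\floor{nt}$.
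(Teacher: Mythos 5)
Your outline reproduces the paper's strategy---the same reduction to increments at checkpoints, the same joint generating function obtained by summing over non-crossing pairings and realized as moments of the free Brownian motion (Proposition \ref{P3}), the same edge substitution $y_k=t_{n,k}(2-v_k^2/(2n))$ and dominated-convergence scheme---but it stops exactly where the paper's real work begins, so as written it has a genuine gap. The step you label ``the main obstacle'' is the bulk of the proof of Theorem \ref{PT3}: after discarding the region where the $Z$-variables are negative, one must (i) establish the pointwise asymptotics \eqref{eq:tangent} of the rescaled transition kernels \eqref{Z-trans}, via the expansion of $\varphi(\varepsilon\delta,2-x\varepsilon^2,2-y\varepsilon^2)$; (ii) rewrite the resulting rational functions using the identity $\int_0^\infty e^{-zx}\sin(ux)\sin(vx)\,dx$; (iii) produce an integrable majorant for the full multi-checkpoint integrand (using that the denominator in \eqref{Z-trans} is minimized at the edge) and justify interchanging the $\vv v$- and $\vv x$-integrations; (iv) evaluate the Gaussian sine-integrals $\alpha_s$, $\beta_s$; and (v) match the outcome \eqref{eq:f} with the known joint density of the Brownian excursion \citep{durrett77weak,ito65diffusion}. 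The heuristic that the vanishing of $\sqrt{4-y^2}$ at the spectral edge ``forces'' the excursion is correct as motivation, but it proves nothing about the joint, multi-time limit; carrying this computation out \emph{is} the theorem.

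Second, you overlook a constraint that changes which continuity theorem you are allowed to invoke at the end. Lemma \ref{L:free-Wick}, and hence Proposition \ref{P3}, requires the time arguments to be nondecreasing. In your parametrization, the pair weight $e^{a_{k(i)}-a_{k(j)}}$ becomes a free covariance $\min(\cdot,\cdot)$ only after factoring $\prod e^{-a_{k(j)}}$ (over non-level positions) out of the pairing sum, and then only if $a_1\le a_2\le\dots\le a_{d+1}$; in the paper's notation this is the requirement $0<z_1<z_2<\dots<z_{d+1}$. Consequently the joint Laplace transforms are shown to converge only on an open cone in the $z$-variables that contains no neighborhood of the origin (the $w$-variables are unrestricted). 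The classical continuity theorem for moment generating functions requires convergence on a neighborhood of the origin, so your assertion that convergence in distribution and the independence of $B$ and $B^{ex}$ ``follow immediately'' from the limiting transform is unjustified as stated. This is precisely why the paper invokes the less well-known Lemma \ref{L3} (Hoffmann-J{\o}rgensen; Mukherjea--Rao--Suen): pointwise convergence of finite Laplace transforms on \emph{some} open set, to a function which is itself the Laplace transform of a random vector, suffices. Your argument needs this lemma (or an equivalent tightness-plus-identification substitute) to close.
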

The proof
requires some additional notation and preparation.
 An analog of \eqref{GenF0}
involves a multivariate integral with respect to the finite-dimensional distributions of a   Markov process $(Z_t)_{t\geq
0}$,
which has the univariate distributions $P(Z_t\in dx)=p_t(x)dx$ with
\begin{equation}
  \label{Z-univ} p_t(x)= \frac{\sqrt{4t-x^2}}{2\pi t}1_{\ccbb{|x|\le 2\sqrt t}}
  ,
\end{equation}
and %
 its  transition probabilities for $0\leq s<t$  are given by  $P(Z_t\in dy\mid Z_s=x)=p_{s,t}(x,y)dy$ with
\begin{equation}
  \label{Z-trans}
  p_{s,t}(x,y)=\frac{1}{2\pi} \frac{(t-s)\sqrt{4t-y^2}}{tx^2+sy^2-(s+t)xy+(t-s)^2}\; \mbox{ for $|x|\leq 2\sqrt{s}, |y|\leq 2\sqrt{t}$},
\end{equation}
starting at  $Z_0 = 0$.

The process $(Z_t)_{t\ge 0}$ is known as the {\em free Brownian motion}. See Appendix \ref{A:free-Wick} for explanation.
The joint moments of  $(Z_t)_{t\geq 0}$ are given by a formula that
resembles the formula \citep{isserlis18formula} for the joint moments of the multivariate normal random variable sometimes known as
Wick's theorem. The formula relies on the concept of non-crossing partition introduced by \citet{kreweras72partitions}.
 Recall that a pair partition $\pi$  of $\{1,\dots,d\}$, where $d$ necessarily even, say $d = 2m$,
is a partition into two-element sets  $\{i_1,j_1\},\{i_2,j_2\}\dots,\{i_m,j_m\}$ with $i_k<j_k, k=1,\dots,m$.
   A pair  partition $\pi$ is crossing if there exist two pairs $\{i_k,j_k\},\{i_{k'},j_{k'}\}\in\pi$  such that
   $i_k<i_{k'}<j_k<j_{k'}$, and is noncrossing otherwise.
 Somewhat more generally, we    denote by  $\mathbf{NC}_2(S)$ the set of all non-crossing pair partitions of a
  finite subset $S\subset\NN$ of even cardinality, see \cite[page 132]{nica06lectures}.

The key identity that we need is the following. %
\begin{lemma}\label{L:free-Wick} For $0<t_1\leq \dots\leq t_d $ we have
   \begin{equation} \label{free-Wick}
E(Z_{t_1}Z_{t_2}%
\cdots
 Z_{t_d})=\begin{cases}
   \displaystyle\sum_{\pi\in \mathbf{NC}_2(d)} \prod_{\{i,j\}\in\pi}  t_i , & \mbox{if $d$ is even}
   , \\
   0 &\mbox{ if $d$ is odd}.
\end{cases}
\end{equation}
\end{lemma}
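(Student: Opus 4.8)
The plan is to recognize $(Z_t)_{t\ge 0}$ as the free Brownian motion and to read \eqref{free-Wick} off the free moment--cumulant machinery, handling the two parities separately. The odd case is immediate from symmetry: both the univariate density \eqref{Z-univ} and the transition density \eqref{Z-trans} are invariant under $(x,y)\mapsto(-x,-y)$, so $(-Z_t)_{t\ge 0}$ has the same finite-dimensional distributions as $(Z_t)_{t\ge 0}$; hence $E(Z_{t_1}\cdots Z_{t_d})=0$ whenever $d$ is odd.

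For even $d$ I would realize $(Z_t)$ concretely on the full Fock space over $L^2[0,\infty)$, writing $Z_t=\ell(1_{[0,t]})+\ell(1_{[0,t]})^*$ with $\ell$ the left creation operator and taking the vacuum state $\varphi(\cdot)=\ip{\Omega,\cdot\,\Omega}$, which under the identification coincides with $E$. The vacuum expectation of a product $\prod_k\bigl(\ell(f_k)+\ell(f_k)^*\bigr)$ is then computed by the standard Fock-space Wick rule: upon expanding the product, only the terms in which creation and annihilation operators are matched according to a non-crossing pairing survive, each matched pair $\{i,j\}$ with $i<j$ contributing $\ip{f_i,f_j}$. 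With $f_k=1_{[0,t_k]}$ one has $\ip{1_{[0,t_i]},1_{[0,t_j]}}=\min(t_i,t_j)=t_i$ for $i<j$ since the times are ordered, and summing over $\pi\in\mathbf{NC}_2(d)$ yields \eqref{free-Wick}. Equivalently, and more in keeping with the non-crossing-partition language already introduced, one computes the free cumulants of the semicircular family $(Z_t)$: all cumulants of order $\ne 2$ vanish, while $\kappa_2(Z_{t_i},Z_{t_j})=E(Z_{t_i}Z_{t_j})=\min(t_i,t_j)$, the last equality following from the martingale identity $E(Z_{t_j}\mid Z_{t_i})=Z_{t_i}$ for $t_i\le t_j$ together with $E(Z_{t_i}^2)=t_i$. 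The moment--cumulant formula expresses $E(Z_{t_1}\cdots Z_{t_d})$ as a sum over all non-crossing partitions of $\{1,\dots,d\}$ of products of cumulants over the blocks; only the pair partitions contribute, recovering \eqref{free-Wick}.

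The main obstacle is the identification step: one must verify that the Fock-space (equivalently, semicircular) process really does have the prescribed marginals \eqref{Z-univ} and transitions \eqref{Z-trans}, so that the abstract cumulant computation applies to our explicitly defined $(Z_t)$. I would relegate this to the appendix referenced after \eqref{Z-trans}. Alternatively, to keep the argument entirely within the given densities, I would run a direct induction on $d$. The symmetry argument disposes of odd $d$; for even $d$ I would first check from \eqref{Z-trans} that the monic orthogonal polynomials of the semicircle law — the Chebyshev polynomials obeying $P_{n+1}(x)=xP_n(x)-P_{n-1}(x)$, rescaled to $t^{n/2}P_n(x/\sqrt t)$ — are space-time harmonic, i.e.\ martingales for $(Z_t)$. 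Peeling off the integration variables one at a time and using this martingale property to telescope the conditional expectations turns the iterated integral $\int\cdots\int x_1\cdots x_d\,p_{t_1}(x_1)\prod_{k\ge 2}p_{t_{k-1},t_k}(x_{k-1},x_k)\,dx$ into exactly the last-pair recursion for non-crossing pairings, in which the index $d$ is joined to some $k<d$ with weight $t_k$ while the complementary blocks $\{1,\dots,k-1\}$ and $\{k+1,\dots,d-1\}$ are paired internally. Matching this recursion to \eqref{free-Wick} is then routine; verifying the Chebyshev martingale identity from \eqref{Z-trans} is the one genuinely computational point, and I expect it to be the crux of the self-contained route.
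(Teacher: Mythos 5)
Your primary route is essentially the paper's own: the paper proves the lemma by identifying $(Z_t)_{t\ge 0}$ with the free Brownian motion of free probability and then reading off the semicircular Wick formula (a sum over $\mathbf{NC}_2(d)$ of products of covariances $\min(t_i,t_j)=t_i$), which is exactly what your Fock-space/free-cumulant computation re-derives. The one substantive difference is how the identification step -- which you rightly call the main obstacle -- gets closed, and here your plan is circular: the ``appendix referenced after \eqref{Z-trans}'' is precisely the appendix in which this lemma is proved, so the verification cannot be deferred to it. The paper closes this gap by citation: results of Biane and of Bo\.zejko, K\"ummerer and Speicher assert that for $t_1\le \dots\le t_d$ and \emph{bounded} Borel $f_1,\dots,f_d$ the classical moments $E(f_1(Z_{t_1})\cdots f_d(Z_{t_d}))$ computed from \eqref{Z-univ}--\eqref{Z-trans} coincide with the trace moments $\tau(f_1(\mathbb{X}_{t_1})\cdots f_d(\mathbb{X}_{t_d}))$ of a semicircular family with covariance $\min\{s,t\}$; since $|Z_t|\le 2\sqrt{t}$ and $\|\mathbb{X}_t\|\le 2\sqrt{t}$, one takes the bounded functions $f_j(x)=x1_{\{|x|\le 2\sqrt{t_j}\}}$ (a truncation trick you would also need, because the identity function is unbounded) and the lemma follows from the defining moment formula of a semicircular family. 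Two smaller remarks: your symmetry argument for odd $d$ is correct but unnecessary in the paper's treatment, since $\mathbf{NC}_2(d)$ is empty for odd $d$ and the cited moment formula already returns $0$; and your alternative self-contained induction via Chebyshev martingale polynomials is a genuinely different, citation-free route that should work, but the martingale verification from \eqref{Z-trans} and the last-pair recursion bookkeeping are exactly the computations the paper avoids by citing the free-probability literature, so as written it remains a sketch rather than a proof.
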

In order not to interrupt the exposition, we postpone the proof to Appendix \ref{A:free-Wick}.
Formula \eqref{free-Wick} then gives the integral formula  for the joint generating function of the
{\em ascent} and {\em level} steps.
\begin{proposition}\label{P3}
For $0<t_1\leq t_2\leq\dots\leq t_n$,
\begin{equation}\label{up2fBM}
\sum_{\gamma_n\in\calM_n} \prod_{j=1}^n t_j^{\eps_j^+}\prod_{j=1}^n u_j^{\delta_j}=E\pp{\prod_{j=1}^n\left(u_j+Z_{t_j}\right)}.
\end{equation}
\end{proposition}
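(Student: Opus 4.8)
The plan is to expand the right-hand side of \eqref{up2fBM} by linearity, evaluate the resulting expectations of monomials in $(Z_{t_j})$ by means of Lemma~\ref{L:free-Wick}, and then match the outcome term-by-term against the decomposition of a Motzkin path into a Dyck part and level steps already used in the proof of Proposition~\ref{P1}.

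First I would expand the product $\prod_{j=1}^n(u_j+Z_{t_j})$ by selecting, at each index $j$ independently, either the scalar $u_j$ or the factor $Z_{t_j}$. Letting $S\subset\{1,\dots,n\}$ record the positions at which $Z$ is chosen, this gives
$$
E\pp{\prod_{j=1}^n\pp{u_j+Z_{t_j}}}=\sum_{S\subset\{1,\dots,n\}}\pp{\prod_{j\notin S}u_j}\,E\pp{\prod_{j\in S}Z_{t_j}}.
$$
Since $0<t_1\le\dots\le t_n$, the factors inside each expectation already appear in the increasing time order demanded by Lemma~\ref{L:free-Wick}. Applying that lemma, the terms with $|S|$ odd vanish, while for $|S|$ even,
$$
E\pp{\prod_{j\in S}Z_{t_j}}=\sum_{\pi\in\mathbf{NC}_2(S)}\ \prod_{\{i,j\}\in\pi}t_i,
$$
where $t_i$ is the time attached to the smaller index of each pair.

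The decisive step is to reinterpret this inner sum combinatorially. I would invoke the standard bijection between $\mathbf{NC}_2(S)$ and Dyck paths indexed by $S$: reading the elements of $S$ from left to right, declare the smaller index of each pair an \emph{opener} (an ascent) and the larger index a \emph{closer} (a descent); the non-crossing condition is precisely what renders the resulting up/down sequence a valid Dyck path, and conversely every Dyck path on $S$ matches each descent to its most recent unmatched ascent, producing a non-crossing pairing. Under this correspondence the openers are exactly the ascent positions, so the weight $\prod_{\{i,j\}\in\pi}t_i$ equals $\prod_{j\in S:\,\eps_j=1}t_j$, the product of $t_j$ over the ascents of the associated Dyck path. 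Hence the inner sum equals $\sum_{\text{Dyck paths on }S}\prod_{j:\,\eps_j=1}t_j$.

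Finally I would assemble the pieces exactly as in Proposition~\ref{P1}: every $\gamma_n\in\calM_n$ splits uniquely into a choice of the even-size set $S$ of non-level steps, a Dyck path on $S$, and level steps at the positions of $S^c$, whereupon the ascents contribute $\prod_{j:\,\eps_j=1}t_j$, the descents contribute $1$, and the level steps contribute $\prod_{j\notin S}u_j$. Summing first over Dyck paths on a fixed $S$ and then over $S$ reproduces precisely the double sum obtained above for the right-hand side, proving \eqref{up2fBM}. I expect the only genuinely delicate point to be the weight-matching in the bijection—verifying that assigning $t_i$ to the smaller index of each pair coincides with assigning $t_j$ to each ascent—together with the parity remark that odd-size $S$ contributes nothing on either side, since such $S$ admits no Dyck path and simultaneously yields a vanishing moment.
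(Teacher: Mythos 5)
Your proposal is correct and follows essentially the same route as the paper's proof: both rest on the decomposition of a Motzkin path into a level set $S^c$ and a Dyck path on $S$, the standard bijection between Dyck paths on $S$ and $\mathbf{NC}_2(S)$ (with ascents as the smaller indices of pairs), and Lemma~\ref{L:free-Wick} together with the vanishing of odd moments. The only difference is direction—you expand the expectation and reassemble the combinatorial sum, while the paper starts from the combinatorial sum and identifies it with the expectation—which is immaterial.
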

\begin{proof}
We will use a natural bijection from the set of all noncrossing pair partitions on  subsets $S\subset \{1,\dots,n\}$ of even cardinality,  to the set of
Motzkin paths, where a pair $(S,\pi)$ with $\pi\in \mathbf {NC}_2(S)$ is mapped to Motzkin path $\gamma_n=(\eps_1,\dots,\eps_n)$ with $\eps_i=0$ if  $i\not\in S$, $\eps_i=1$ if $\{i,j\}\in \pi$ and
$i<j$ and $\eps_i=-1$ otherwise. This is of course the standard decomposition of a Motzkin path into the level part over $S^c$ and a Dyck path over $S$, the latter
in one-to-one correspondence with noncrossing
pair partitions by \citet[Exercise 6.19]{stanley99enumerative}.
So the left-hand side of \eqref{up2fBM} is
$$\sum_{S\subset\{1,\dots,n\}} \prod_{j\not\in S} u_j \sum_{\pi\in \mathbf{NC}_2(S)} \prod_{\{i,j\}\in \pi} t_i,$$
where the sum is over the subsets $S$ of even cardinality.
But  for nondecreasing  $t_1,\dots,t_n$,
\begin{align*}
\prod_{j\not\in S} u_j\sum_{\pi\in \mathbf{NC}_2(S)} \prod_{\{i,j\}\in \pi} t_i& =
E\left(\prod_{j\not\in S} u_j\prod_{k\in S} Z_{t_k}\right).
\end{align*}
The expectation on the right-hand side is $0$ when $S$ is of odd cardinality, so summing the right-hand side over all $S\subset\{1,\dots,n\}$ we get the right-hand side of \eqref{up2fBM}.
\end{proof}
We remark that   \eqref{up2fBM} is a generalization of \eqref{GenF0}.
 We will use \eqref{up2fBM}
to  prove the convergence of Laplace transforms on an open set that does not include the origin. This will prove the convergence of finite-dimensional distributions
by an application the following result which  is not  well known.
\begin{lemma}\label{L3} Let   $\vv{X}\topp n=(X_1\topp n,X_2\topp n,\dots,X_d\topp n)$   be sequence of  random vectors with Laplace transforms
 $\calL_n(\vv{z})=\calL_n(z_1,\dots,z_d)=E \exp(\sum_{j=1}^d z_j X_j\topp n)$
which  are finite and converge pointwise to a function $\calL(\vv{z})$ for all $\vv{z}$ from an open set in $\mathbb{R}^d$.
If  $\calL(\vv{z})$ is a Laplace transform of a random variable  $\vv{Y}=(Y_1,\dots,Y_d)$, then  $\vv{X}\topp n$ converges in distribution to $\vv{Y}$.
\end{lemma}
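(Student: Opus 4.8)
The plan is to reduce the statement to the classical continuity theorem for moment generating functions that are finite in a \emph{neighborhood of the origin}, via an exponential change of measure, and then to invert that change of measure. The point is that the given open set need not contain $0$, so the characteristic functions $\vv\xi\mapsto\calL_n(i\vv\xi)$ are not directly controlled and L\'evy's theorem cannot be applied as is; tilting relocates the problem to the origin, where the classical theory applies.

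First I would fix a point $\vv z_0$ in the given open set and a closed ball $\overline B(\vv z_0,\rho)$ contained in it. Writing $\mu_n$ for the law of $\vv X\topp n$ and $\nu$ for the law of $\vv Y$, I introduce the tilted probability measures $\tilde\mu_n(dx)=e^{\ip{\vv z_0,x}}\mu_n(dx)/\calL_n(\vv z_0)$ and $\tilde\nu(dx)=e^{\ip{\vv z_0,x}}\nu(dx)/\calL(\vv z_0)$, which are well defined since $\calL_n(\vv z_0)$ and $\calL(\vv z_0)=E\,e^{\ip{\vv z_0,\vv Y}}$ are finite and strictly positive. Their Laplace transforms are $\tilde{\calL}_n(\vv w)=\calL_n(\vv z_0+\vv w)/\calL_n(\vv z_0)$, finite for $\vv w\in B(0,\rho)$ and converging there to $\tilde{\calL}(\vv w)=\calL(\vv z_0+\vv w)/\calL(\vv z_0)$, the Laplace transform of $\tilde\nu$. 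Thus the tilted transforms converge on a full neighborhood of the origin to a bona fide moment generating function, and the classical continuity theorem gives $\tilde\mu_n\Rightarrow\tilde\nu$. (If one prefers a self-contained argument: a pointwise-convergent sequence of convex functions is uniformly bounded on compact subsets of the interior of its domain, so $\sup_n\tilde{\calL}_n$ is bounded on $\overline B(0,\rho/2)$; Markov's inequality then gives uniform exponential tail bounds, hence tightness, and every weak subsequential limit has Laplace transform $\tilde{\calL}$ near $0$, hence equals $\tilde\nu$ by uniqueness.)

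The crux is to untilt. Consider the reweighted finite measures $\lambda_n(dx)=e^{-\ip{\vv z_0,x}}\tilde\mu_n(dx)=\mu_n(dx)/\calL_n(\vv z_0)$, whose total masses $\lambda_n(\RR^d)=1/\calL_n(\vv z_0)$ converge to $1/\calL(\vv z_0)$. For any continuous, compactly supported $g$ the function $g(x)e^{-\ip{\vv z_0,x}}$ is bounded and continuous, so $\int g\,d\lambda_n\to\int g(x)e^{-\ip{\vv z_0,x}}\tilde\nu(dx)=\int g\,d\lambda$, where $\lambda(dx)=e^{-\ip{\vv z_0,x}}\tilde\nu(dx)=\nu(dx)/\calL(\vv z_0)$; hence $\lambda_n\to\lambda$ vaguely. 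Here the hypothesis that $\calL$ is a genuine moment generating function enters decisively: because $\nu$ is a probability measure, the vague limit has total mass $\lambda(\RR^d)=1/\calL(\vv z_0)=\lim_n\lambda_n(\RR^d)$, so no mass escapes to infinity and vague convergence upgrades to weak convergence $\lambda_n\Rightarrow\lambda$. Multiplying by the convergent constants $\calL_n(\vv z_0)\to\calL(\vv z_0)$ then yields $\mu_n\Rightarrow\nu$, i.e.\ $\vv X\topp n\toD\vv Y$.

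The step I expect to be the main obstacle is exactly this untilting: the weight $e^{-\ip{\vv z_0,x}}$ is unbounded, so weak convergence of $\tilde\mu_n$ does not pass through it automatically, and in fact the original sequence $\mu_n$ need \emph{not} be tight, since mass of $\vv X\topp n$ escaping to infinity in the direction $-\vv z_0$ is invisible to $\calL_n$ on the given open set. The matching of total masses, guaranteed precisely by the assumption that the limit is the Laplace transform of an actual random variable (total mass one), is what excludes such leakage and legitimizes the vague-to-weak upgrade.
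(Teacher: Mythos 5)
Your proof is correct, and it is essentially the argument behind the result the paper invokes: the paper itself gives no proof, citing instead Hoffmann-J{\o}rgensen and Mukherjea--Rao--Suen (with the multivariate adaptation in Theorem A.1 of \citep{bryc17limit}), and that proof proceeds exactly by your route --- exponential tilting to relocate the problem to a neighborhood of the origin, the classical continuity theorem there, and then untilting. Your identification of the crux is also the right one: the unbounded weight $e^{-\ip{\vv z_0,x}}$ means weak convergence of the tilted laws does not transfer back by itself, and it is the matching of total masses $1/\calL_n(\vv z_0)\to 1/\calL(\vv z_0)$ --- available precisely because the limit is the Laplace transform of a genuine probability law --- that upgrades vague to weak convergence.
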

In the univariate case, this result is due to   \citet[Section 5.14, page 378, (5.14.8)]{hoffmannjorgensen94probability}. It was rediscovered
by Mukherjea, Rao and Suen \citep[Theorem 2]{mukherjea06note}
 and the proof given there
 works also in the multivariate setting, see \citep[Theorem A.1]{bryc17limit}.
\begin{proof}[Proof of Theorem \ref{PT3}]
Denote
$$F_n(s)=\frac{2A_n(s)+L_n(s)-\floor{ns}}{\sqrt{2n}},G_n(s)= \frac{3L_n(s)-\floor{ns}}{\sqrt{2n}}
,$$
and fix $0=s_0<s_1<s_2<\dots<s_d<s_{d+1}=1$. Since $F_n(0)=G_n(0)=0$, and $(F_n(s),G_n(s))$ differs by at most $2/\sqrt{n}$ from the process in \eqref{MultiVariate},
it is enough to prove that the vector of increments $\vv{X}\topp n\in\RR^{d+1}\times \RR^{d+1}$ with components
\[
X_j\topp n=\pp{F_n(s_j)-F_n(s_{j-1}),G_n(s_j)-G_n(s_{j-1})}, j=1,\dots,d+1,
\]
 converges in distribution to the vector  $\vv{Y}$
with components %
\[
Y_j=\pp{\frac{1}{\sqrt{3}}\pp{B^{ex}_{s_j}-B^{ex}_{s_{j-1}}},B_{s_j}-B_{s_{j-1}}} , j=1,\dots,d+1.
\]
Fix $\vv{z}=(z_1,\dots,z_{d+1})$ with $0<z_1<z_2<\dots<z_{d+1}$, and $\vv{w}=(w_1,\dots,w_{d+1})$. The plan is to
 compute the limit of
the Laplace transforms, $\calL_n(\vv z,\vv w)$ below, and identify the limit as the Laplace transform of  $\vv{Y}$. This will conclude the proof by Lemma \ref{L3}.

For $k=1,\dots, d+1$ it is convenient to introduce the following notation:
$$
\calN_k=\left\{j\in\NN: s_{k-1}n<j\leq  s_{k}n  \right\},\quad n_k=|\calN_k|=\floor{s_kn}-\floor{s_{k-1}n},\; $$
\begin{equation}
  \label{u-and-t}
  u_{n,k}=e^{w_k/\sqrt{2n}}, \quad t_{n,k}=e^{z_k/\sqrt{2n}}.
\end{equation}
We rewrite the Laplace transform as follows
\begin{align*}%
\calL_n(\vv{z},\vv w)& =E \exp \left(\sum_{k=1}^{d+1} z_k (F_n(s_k)-F_n(s_{k-1})) +\sum_{k=1}^{d+1} w_k (G_n(s_k)-G_n(s_{k-1}))\right)\\& =
\prod_{k=1}^{d+1} e^{- n_k (z_k+w_k)/\sqrt{2 n}} E\left(\prod_{k=1}^{d+1} \exp \left(\frac{2 z_k}{\sqrt{2n}} \sum_{j\in\calN_k}\eps_j^+ +\frac{ z_k+3w_k}{\sqrt{2n}} \sum_{j\in\calN_k}\delta_j\right)\right)
\\ &=
\prod_{k=1}^{d+1} t_{n,k}^{-n_k} u_{n,k}^{-n_k}  E\left(\prod_{k=1}^{d+1}  t_{n,k}^{2 \sum_{j\in\calN_k}\eps_j^+}  t_{n,k}^{ \sum_{j\in\calN_k}\delta_j}   u_{n,k}^{3 \sum_{j\in\calN_k}\delta_j} \right),
\end{align*}
where by \eqref{up2fBM} the expectation above is the same as
\[
\frac{1}{M_n} E\left(\prod_{k=1}^{d+1}\pp{t_{n,k}u_{n,k}^3 +Z_{t_{n,k}^2 }}^{n_k}\right).
\]
Therefore the Laplace transform can be written as the   functional of process $(Z_t)_{t\geq 0}$:
\begin{equation}\label{L_n}
\calL_n(\vv{z},\vv w)
=\frac{1}{M_n}E\left(\prod_{k=1}^{d+1}\left(  u_{n,k}^2+\frac{Z_{t_{n,k}^2}}{u_{n,k}t_{n,k}}\right)^{n_k}\right).
\end{equation}
Since asymptotic behavior \eqref{M-growth} for Motzkin numbers is well known, we concentrate on the asymptotic of the integral
on the right-hand side of \eqref{L_n}. Since $t_{n,k}\to 1$ as $n\to\infty$, it is clear that the limit of the Laplace
transforms can be determined from the analysis of the process $(Z_t)_{t\in[1-\eps,1]}$. Moreover, we note that if $Z_t<0$ then
for $\delta>0$ we have
$$\left| u^2+\frac{Z_{t^2}}{ut}\right|\leq \max \ccbb{u^2, \frac2u}<2+\delta$$ for
$u$
close enough  to $1$.
So for %
large enough
 $n>N(\vv{w},\delta)$, if $Z_{t_j^2}<0$ for some $j$ then
\begin{align*}
\left|\prod_{k=1}^{d+1}\left(  u_{n,k}^2+\frac{Z_{t_{n,k}^2}}{u_{n,k}t_{n,k}}\right)^{n_k}\right|
& \leq (2+\delta)^{n_j} \prod_{k\ne j}\left(u_{n,k}^2+\frac2{u_{n,k}}\right)^{n_k}
\\
& \leq
(2+\delta)^{n_j}\left( 3+\delta \right)^{n-n_j}
=(2+\delta)^{n \theta}\left( 3+\delta\right)^{n(1-\theta)}
=C^n,
\end{align*}
where $C=(2+\delta)^{  \theta}\left(3+\delta\right)^{(1-\theta)}\to 3 (2/3)^\theta<3$ as $\delta\to 0$. This shows that for small enough
 $\delta>0$ we have
$C^n/M_n\to 0$. Thus,   only the integral   over positive $Z_{t_k^2}$ contributes to the limit on the right-hand side of \eqref{L_n}.
That is,
\begin{equation}
\label{eq:L_n'}
\calL_n(\vv z,\vv w) \sim
\frac{1}{M_n}E\left(\prod_{k=1}^{d+1}\left(  u_{n,k}^2+\frac{Z_{t_{n,k}^2}}{u_{n,k}t_{n,k}}\right)^{n_k}1_{\ccbb{Z_{t_{n,k}^2}>0,k=1,\dots,d+1}}\right).
\end{equation}

Next, %
\begin{multline*}
E\left(\prod_{k=1}^{d+1}\left(  u_{n,k}^2+\frac{Z_{t_{n,k}^2}}{u_{n,k}t_{n,k}}\right)^{n_k}%
1_{\ccbb{Z_{t_{n,k}^2}>0, k=1,\dots,d+1}}
\right)\\
= \int_0^{2 t_{n,1} }%
\dots\int_0^{2 t_{n,d+1}}\prod_{k=1}^{d+1}\left(u_{n,k}^2+\frac{y_k}{u_{n,k}t_{n,k}}\right)^{n_k}
p_{t_{n,1}^2 }(y_1)\prod_{k=2}^{d+1}p_{t_{n,k-1}^2,t_{n,k}^2}(y_{k-1},y_k)\, d\vv y,
\end{multline*}
where $p_t$ and $p_{s,t}$ are densities from \eqref{Z-univ} and \eqref{Z-trans}. To find the asymptotic behavior of the
latter integral, we substitute  $y_k=t_{n,k} (2-v_k^2/(2n))$ and write
\begin{multline}\label{pre-lim}
E\left(\prod_{k=1}^{d+1}\left(  u_{n,k}^2+\frac{Z_{t_{n,k}^2}}{u_{n,k}t_{n,k}}\right)^{n_k}1_{\ccbb{Z_{t_{n,k}^2,k=1,\dots,d+1}>0}}\right)\\
=\frac{3^{n}}{n^{3/2}}%
\int_0^{2 \sqrt{n}}%
\dots\int_0^{2\sqrt{n}} \prod_{k=1}^{d+1} \left(\frac{u_{n,k}^2}{3 }+\frac{2}{3u_{n,k}}-\frac{ v_k^2}{6nu_{n,k}}\right)^{n_k}
\psi_n(\vv{v}) \,d\vv{v},
\end{multline}
where %
$\vv{v}=(v_1,\dots,v_{d+1})$ and
$$
\psi_n(\vv{v})=\sqrt{n}t_{n,1}v_1p_{{t_{n,1}^2}}(y_1(\vv v))\prod_{k=2}^{d+1} \frac{v_k}{n} t_{n,k}  p_{t_{n,k-1}^2,t_{n,k}^2}(y_{k-1}(\vv v),y_k(\vv v)).
$$

The rest of the proof combines arguments from \citep{bryc17limit,bryc18dual}. For completeness, we include the
details which  in the current setting are more straightforward. Recalling
\eqref{u-and-t},
we first study the limit of the integrand. Clearly,
\begin{equation*}
 \lim_{n\to\infty} \left(\frac{u_{n,k}^2}{3 }+\frac{2}{3u_{n,k}}-\frac{ v_k^2}{6nu_{n,k}}\right)^{n_k}=
 \exp\pp{\frac12(s_k-s_{k-1})w_k^2 - \frac16(s_k-s_{k-1})v_k^2}.
\end{equation*}
Next, we look at the limit of the first factor in $\psi_n(\vv{v})$. We have
$$ \lim_{n\to\infty}\sqrt{n}t_{n,1}v_1p_{t_{n,1}^2 }\left(t_{n,1}\pp{2-\frac{v_1^2}{2n}}\right)=\frac{v_1^2}{\pi\sqrt{2}}.$$
 The remaining factors in $\psi_n(\vv{v})$ also converge. %
 Recalling the definition of $p_{s,t}(x,y)$, we write %
\[
p_{t_{n.k-1}^2,t_{n,k}^2}(y_{k-1}(\vv v),y_k (\vv v)) = \frac1{2\pi}\frac{(t_{n,k}^2-t_{n,k-1}^2)\sqrt{4t_k^2- t_{n,k}^2(2-v_k^2/(2n))^2}}{\varphi((z_k-z_{k-1})/\sqrt{2n},2-v_{k-1}^2/(2n),2-v_{k}^2/(2n))}
,
\]
with
\[
\varphi(\delta,x,y) = e^{-2\delta}\pp{4\sinh^2\delta+(x-y)^2+2xy(1-\cosh\delta)}.
\]
One can show that
\[
\varphi(\varepsilon\delta,2-x\varepsilon^2,2-y\varepsilon^2)\sim \varepsilon^4\pp{(x-y)^2+2(x+y)\delta^2+\delta^4}
\]
as $\varepsilon\downarrow0$.
 See \citep[page 343]{bryc16local}, where our $\varphi$ is the same function as $\varphi_{0,0}$ therein.
Then, it follows that %
\begin{multline}
\label{eq:tangent}
p_{t_{n,k-1}^2,t_{n,k}^2}(y_{k-1}(\vv v),y_k(\vv v)) \\
\sim \frac{4n}\pi\frac{(z_k-z_{k-1})v_k}{(v_{k-1}^2-v_k^2)^2 + 2(v_{k-1}^2+v_k^2)(z_k-z_{k-1})^2+(z_k-z_{k-1})^4}.
\end{multline}
Noting that for $z>0$,
\begin{align*}
&   \frac{2zuv}{(z^2+(u-v)^2)(z^2+(u+v)^2)}= \frac{z/2}{z^2+(u-v)^2}
  -  \frac{z/2}{z^2+(u+v)^2}\\
& \quad\quad= \frac12\int_0^\infty e^{- z x}\cos((u-v)x) \,dx -\frac12\int_0^\infty e^{- z x}\cos((u+v)x) \,dx
\\
& \quad\quad =\int_0^\infty e^{- z x}\sin(ux)\sin(vx)\,dx,
\end{align*}
we obtain
\begin{align*}
 \psi_n &(\vv v) %
 \to
  \frac{v_1^2}{\pi\sqrt 2}\prodd k2{d+1}\frac{4 v_k^2(z_k-z_{k-1})}{\pi((v_k-v_{k-1})^2+  (z_k-z_{k-1})^2)((v_k+v_{k-1})^2+ (z_k-z_{k-1})^2)}\\
& = \frac{2^dv_1v_{d+1}}{\pi^{d+1}\sqrt 2}\prodd k2{d+1} \frac{2 (z_k-z_{k-1})\cdot v_{k-1}v_k}{((v_k-v_{k-1})^2+ (z_k-z_{k-1})^2)((v_k+v_{k-1})^2+  (z_k-z_{k-1})^2)}\\
& = \frac{2^dv_1v_{d+1}}{\pi^{d+1}\sqrt 2}\prodd k1d \int_{\mathbb R_+} e^{-(z_{k+1}-z_{k})x_k}\sin (v_{k+1} x_k)\sin(v_{k}x_k) \, dx_k.
\end{align*}

Next, we verify that we can pass to the limit under the integral sign.
Consider the $k$-th factor in the product. As in the proof of \eqref{f1bound}, %
there
are $N$ and  $\theta>0$ such that for all $n>N$ and all $0<v_k^2<2n$ we have
$$0<\frac{u_{n,k}^2}{3 }+\frac{2}{3u_{n,k}}-\frac{ v_k^2}{6nu_{n,k}}<e^{w_k^2/n}e^{-\theta v_k^2/n}.$$
Thus with $ \theta_1=\theta \min _{1\leq k\leq d+1}(s_k-s_{k-1})/2$ and $n$ large enough (so that $1/n<(s_k-s_{k-1})/2$)   we have
$$
\prod_{k=1}^{d+1}
\left(\frac{u_{n,k}^2}{3 }+\frac{2}{3u_{n,k}}-\frac{ v_k^2}{6nu_{n,k} }\right)^{n_k}\leq %
\exp\pp{\sum_{k=1}^{d+1}w_k^2-\theta_1\sum_{k=1}^{d+1}v_k^2}.
$$
Since the minimum of the denominator in \eqref{Z-trans} occurs for $x=2\sqrt{s}$, $y=2\sqrt{t}$ and
it is then equal $(\sqrt{t}-\sqrt{s})^4$, we see that %
up to a multiplicative constant
 $\psi_n(\vv{v})$ is bounded by %
  $$\prod_{k=1}^{d+1}  \frac{v_k^2(t_{n,k} +t_{n,k-1} )}{n^{3/2}(t_{n,k} -t_{n,k-1} )^3}
  \sim 2^{5d/2} \prod_{k=1}^{d+1}\frac{v_k^2}{(z_k-z_{k-1})^3}. $$
 So the integrand on the right-hand side of \eqref{pre-lim} is bounded by a constant times
 the function
  $v_1^2\cdots v_{d+1}^2\exp(-\theta_1(v_1^2+\dots+v_{d+1}^2))$,
which is integrable over $\RR_+^{d+1}$.
By including the limits of integration as indicators in the integrand, this bound holds for all $0\leq v_1,\dots,v_{d+1}<\infty$.
We can therefore pass to the limit under the integral on the right-hand side of \eqref{pre-lim}. We get
\begin{align*}
\frac1{M_n}E& \left(\prod_{k=1}^{d+1}\left(  u_{n,k}^2+\frac{Z_{t_{n,k}^2}}{u_{n,k}t_{n,k}}\right)^{n_k}1_{\ccbb{Z_{t_{n,k}^2,k=1,\dots,d+1}>0}}\right)\nonumber\\
& \sim \frac1{M_n}\frac{3^n}{n^{3/2}}\int_{\mathbb R_+^{d+1}}\exp\pp{\frac12\summ k1{d+1}(s_k-s_{k-1})w_k^2 - \frac16(s_k-s_{k-1})v_k^2}\nonumber\\
& \quad \quad \times \frac{2^dv_1v_{d+1}}{\pi^{d+1}\sqrt 2}\prodd k1d \int_0^\infty
e^{-(z_{k+1}-z_k)x_k}
\sin(v_{k+1}s_k)\sin(v_kx_k)\, dx_k\, d\vv v.\nonumber\\
\end{align*}
Thus \eqref{eq:L_n'} becomes
\begin{equation}\label{lim-L0}
\lim_{n\to\infty}\calL_n(\vv z,\vv w)  =\frac{2^{d+1/2}}{\pi^{d+1/2}3^{3/2}}\exp\pp{\frac12\summ k1{d+1}(s_k-s_{k-1})w_k^2}\int_{\mathbb R_+^{d+1}}\int_{\mathbb R^d_+} g(\vv v,\vv x)\, d\vv x d\vv v,
\end{equation}
where %
\begin{multline*}%
  g(\vv v, \vv x) \\
  =%
   v_1v_{d+1}e^{-v_{d+1}^2(1-s_d)/6}\prod_{k=1}^{d}e^{-v_k^2(s_k-s_{k-1})/6}
  e^{-(z_{k+1}-z_{k})x_k}\sin (v_{k+1} x_k)\sin(v_{k}x_k).
\end{multline*}

Noting that $s_k-s_{k-1} >0$ and $z_{k+1}-z_k>0$, we see that  $|g(\vv v, \vv x)|$ is bounded by the integrable function of the form
$$  v_1v_{d+1}\exp\pp{-\theta \pp{\sum_{k=1}^{d+1} v_k^2+\sum_{k=1}^d x_k }} $$
for some $\theta>0$.
So  the order of iterated integrals on the right-hand side of
\eqref{lim-L0} can be interchanged.
We then have
\begin{align}
\frac{2^{d+1/2}}{\pi^{d+1/2}3^{3/2}} & \int_{\mathbb R_+^d}\int_{\mathbb R_+^{d+1}}g(\vv v,\vv x)\, d\vv v d\vv x\nonumber =
\int_{\mathbb R_+^d}\frac{\sqrt {8\pi}}{3^{3/2}} \exp\pp{-\summ k1d(z_{k+1}-z_k)x_k}\\
& \quad\times  \frac 1\pi \int_{\mathbb R_+}v_1e^{-s_1v_1^2/6}\sin(v_1x_1)\, dv_1\nonumber\\
& \quad \times \prodd k2d \frac2\pi\int_{\mathbb R_+}e^{-(s_k-s_{k-1})v_k^2/6}\sin(v_kx_k)\sin(v_kx_{k-1})\, dx_k\nonumber\\
& \quad \times \frac 1\pi \int_{\mathbb R_+}v_{d+1}e^{-v_{d+1}^2/6}\sin(v_{d+1}x_d)\, d{v_{d+1}} \, d\vv x.\nonumber
\end{align}
So \eqref{lim-L0} now becomes
\begin{multline}
\lim_{n\to\infty}\calL_n(\vv z,\vv w) \\
 =\exp\pp{\frac12\summ k1{d+1}(s_k-s_{k-1})w_k^2}
 \int_{\mathbb R_+^d}  \exp\pp{-\summ k1d(z_{k+1}-z_k)x_k} f(\vv x)\, d \vv x,
\label{eq:Lap(f)}
\end{multline}
where
\begin{equation}
f(\vv x) = \frac{ \sqrt{8\pi}}{ 3\sqrt{3}}\alpha_{s_1}(x_1)\alpha_{1-s_d}(x_d)   \prod_{k=2}^{d}\beta_{s_k-s_{k-1}}(x_{k-1},x_k) \label{eq:f}
\end{equation}
with
\begin{equation*}
\alpha_{s}(x )=\frac{1}{\pi}\int_0^\infty ve^{-s  v^2/6}\sin(vx )dv = \frac{3 \sqrt{3}}{\sqrt{2\pi} s^{3/2} }  x e^{-3 x^2/(2s)}
,\end{equation*}
and
\begin{align*}
\beta_{s}&(x,y)=\frac{2}{\pi}\int_0^\infty e^{-s v^2/6} \sin (x v)\sin (yv)\, dv
\\
&=
\frac{1}{\pi}\int_0^\infty e^{-s v^2/6} \cos ((y-x) v) d\,v - \frac{1}{\pi}\int_0^\infty e^{-s v^2/6} \cos ((y+x) v) \, dv\\
&=\frac{\sqrt{3}}{\sqrt{2\pi s}}\left(\exp\left(-\frac{3(y-x)^2}{2s}\right)-\exp\left(-\frac{3(y+x)^2}{2s}\right)\right).
\end{align*}

To conclude the proof, we now match the density in \eqref{eq:f} to the joint density of Brownian excursion.
It is known that the joint  probability density function of the Brownian excursion $B^{ex}_{s_1}, \dots, B^{ex}_{s_d}$ is
\begin{equation*}
    f_{s_1,\dots,s_d}(x_1,\dots,x_d) = \sqrt{8\pi}\ell_{s_1}(x_1) \ell_{1-s_d}(x_d)\prod_{k=1}^{d-1}g_{s_{k+1}-s_k}(x_k,x_{k+1})
\end{equation*}
with
\begin{equation*}
  \ell_t(y) = \frac1{\sqrt{2\pi t^3}}  y\exp\left(-\frac{y^2}{2t}\right), {t,y>0}
  ,
\end{equation*}
and
\begin{equation*}
  g_t(y_1,y_2) = \frac1{\sqrt{2\pi t}}\pp{\exp\left(-\frac{(y_1-y_2)^2}{2t}\right) - \exp\left(-\frac{(y_1+y_2)^2}{2t}\right)}, {t,y_1,y_2>0}.
\end{equation*}
See
\citep{durrett77weak}, \citep[page 76]{ito65diffusion},
 or
\citep[page 464]{revuz99continuous}.
Thus the density of $(B^{ex}_{s_1}, \dots, B^{ex}_{s_d})/\sqrt{3}$ is

\begin{align*}
3^{d/2}& f_{s_1,\dots,s_d}\pp{\sqrt{3}x_1,\dots,\sqrt{3}x_d}\\
&=
  \frac{\sqrt{8\pi}}{\sqrt{3}}\cdot \sqrt{3}\ell_{s_1}\pp{\sqrt{3}x_1} \cdot \sqrt{3}\ell_{1-s_d}\pp{\sqrt{3}x_d}\prod_{k=1}^{d-1}\pp{\sqrt{3}g_{s_{k+1}-s_k}\pp{\sqrt{3}x_k,\sqrt{3}x_{k+1}}}
  \\&=\frac{\sqrt{8\pi}}{\sqrt{3}} \frac{\alpha_{s_1}(x_1)}{\sqrt{3}}\frac{ \alpha_{1-s_d}(x_d)}{\sqrt{3}}\prod_{k=2}^{d}\beta_{s_{k}-s_{k-1}}(x_{k-1},x_{k}) = f(\vv x).
\end{align*}
Combining %
\eqref{eq:Lap(f)} and the above, we have shown that
\begin{multline*}
\lim_{n\to\infty}\calL_n(\vv z,\vv w)=
 E \exp \pp{\sum_{k=1}^{d+1} w_k (B_{s_k}-B_{s_{k-1}})} E\exp\pp{\frac1{\sqrt 3}\sum_{k=1}^{d} (z_{k+1}-z_k) B_{s_k}^{ex}%
 }
\\ = E \exp \pp{\sum_{k=1}^{d+1} w_k (B_{s_k}-B_{s_{k-1}})} E\exp\pp{-\frac1{\sqrt 3}\sum_{k=1}^{d+1} z_k (B_{s_k}^{ex}-B_{s_{k-1}}^{ex})%
}.
\end{multline*}
By Lemma \ref{L3}, this ends the proof.
\end{proof}
\begin{remark}
In \citep{bryc17limit}, another ingredient of the proof is to introduce the so-called {\em tangent process}, a positive self-similar Markov process with explicit transition density function that has its own interest, and in particular plays a role in the Laplace transform of Brownian excursion \citep{bryc18dual}.  Here, we choose to not to elaborate on the tangent process in order to reduce the probabilistic flavor of the proof. Instead we only mention that the tangent process arises in the step \eqref{eq:tangent}, where the right-hand side is the same as $2nq_{z_{k-1},z_k}(v_{k-1}^2,v_k^2)$ with $q_{s,t}(x,y)$ being the transition density function of the tangent process as in \citep[Eq.(4.1)]{bryc17limit}.
\end{remark}
\begin{proof}[Proof of Theorem \ref{T1}]
To see that Theorem \ref{PT3}  is an equivalent formulation of Theorem \ref{T1}, note that since $D_n(s)=\floor{ns}-L_n(s)-A_n(s)$, we have
\begin{eqnarray*}
\frac{1}{\sqrt{2n}} \pp{A_n(s)-\frac{\floor{ns}}3} &=& \frac1{2}F_n(s)-\frac{1}{6}G_n(s)
\\
\frac{1}{\sqrt{2n}} \pp{L_n(s)-\frac{\floor{ns}}3}  &=&  \frac{1}{3}G_n(s)
\\
\frac{1}{\sqrt{2n}}\pp{D_n(s)-\frac{\floor{ns}}3} &=&  -\frac1{2}F_n(s)-\frac{1}{6}G_n(s).
\end{eqnarray*}

\end{proof}

\section{Comments and remarks}\label{Sec:RM}
\subsection{Sulanke polynomials}%
The topic of this research was
also
inspired by \citet{sulanke00moment,sulanke01bijective} who studied recursions for  polynomials $\calS_n(t)$ which are the sum over all Motzkin paths of length $n$ of the products of weights along a path.
To define these polynomials, Sulanke assigned weight $1$ to ascent and descent steps, and assigned the weight of indeterminate  $t$ to each level step.
 He then  gave a bijective proof of a recursion for $\calS_n$.
(We note that Sulanke considered elevated Motzkin paths, thus his $f_n(t)$ is $\calS_{n-2}(t)$ in our notation.)

Clearly,
$\calS_n(t)=\varphi(t,t,\dots,t)$, where $\varphi$ is given by \eqref{GenF0}. This gives the generating function for Sulanke polynomials.
\begin{proposition} Setting $\calS_0(t)=1$,  we have
\begin{equation}
  \label{MGF}\sum_{n=0}^\infty z^n \calS_n(t)=\frac{1-tz-\sqrt{(1-tz)^2-4z^2}}{2z^2}.
\end{equation}
\end{proposition}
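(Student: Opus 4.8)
The plan is to specialize the integral representation \eqref{GenF0} at $\vv u = (t,\dots,t)$ and recognize the resulting generating function as a Cauchy (Stieltjes) transform of the Wigner semicircle law. By Proposition \ref{P1}, setting all $u_j = t$ gives
\[
\calS_n(t) = \frac{1}{2\pi}\int_{-2}^2 (t+y)^n \sqrt{4-y^2}\,dy.
\]
First I would multiply by $z^n$ and sum over $n \ge 0$. For $z$ in a neighborhood of the origin, say $|z| < 1/(|t|+2)$ so that $|z(t+y)| < 1$ uniformly for $y \in [-2,2]$, the geometric series converges uniformly and may be exchanged with the integral, yielding
\[
\summ n0\infty z^n \calS_n(t) = \frac{1}{2\pi}\int_{-2}^2 \frac{\sqrt{4-y^2}}{1 - z(t+y)}\,dy.
\]

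Next, I would rewrite the integrand to expose the Cauchy transform. Writing $a = (1-tz)/z$ and factoring out $-z$, one has $\frac{1}{1-z(t+y)} = \frac{1}{z}\cdot\frac{1}{a - y}$, so the right-hand side equals $\frac{1}{z}\, G(a)$, where $G(a) = \frac{1}{2\pi}\int_{-2}^2 \frac{\sqrt{4-y^2}}{a-y}\,dy$ is the Cauchy transform of the semicircle law. Invoking the standard evaluation $G(a) = \frac{a - \sqrt{a^2-4}}{2}$, with the branch normalized by $G(a) \sim 1/a$ as $a \to \infty$, I would substitute back $a = (1-tz)/z$ and simplify $a^2 - 4 = \bb{(1-tz)^2 - 4z^2}/z^2$ to arrive at \eqref{MGF}.

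The only points requiring care, neither of them a genuine obstacle, are the justification of the term-by-term integration and the choice of branch of the square root. The former is handled by the uniform bound above, which is valid on a disk around $z=0$; the identity then holds between functions analytic there and extends by analytic continuation. The branch is fixed by the requirement that the generating function be analytic at $z=0$ with constant term $\calS_0(t) = 1$: a short Taylor expansion of the numerator, namely $1 - tz - \sqrt{(1-tz)^2 - 4z^2} = 2z^2 + O(z^3)$, confirms that the minus-sign branch gives the value $1$ at $z=0$, matching the convention $\calS_0(t)=1$.
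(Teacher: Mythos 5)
Your proof is correct and follows essentially the same route as the paper's: specialize \eqref{GenF0} at $u_1=\dots=u_n=t$, sum the geometric series under the integral, and recognize the result as $G(1/z-t)/z$ (your $a=(1-tz)/z$ is exactly $1/z-t$), where $G$ is the Cauchy--Stieltjes transform of the semicircle law with its standard evaluation. The paper leaves the interchange of sum and integral and the branch choice implicit, so your extra care on those two points is a welcome but inessential addition.
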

(When $t=1$ this is of course the well known expression for the generating function of the Motzkin numbers.)
\begin{proof}From \eqref{GenF0} we get
\begin{equation}
  \label{Sulanke2FB}
\calS_n(t)=\frac{1}{2\pi}\int_{-2}^2 (t+y)^n\sqrt{4-y^2}\,dy, \; n=0,1,2\dots
\end{equation}
 Summing the series on the left-hand side of \eqref{MGF} we see that  the generating function of Sulanke polynomials is $G(1/z-t)/z$, where
$$
G(z)=\frac{1}{2\pi}\int_{-2}^2 \frac{\sqrt{4-y^2}}{z-y}\,dy = \frac{z-\sqrt{z^2-4}}{2}
$$
is the   Cauchy--Stieltjes transform of the semicircle law,
see for example \citet[Example 3.1.1]{hiai00semicircle}.
\end{proof}
We remark that asymptotic normality of $L_n(1)$ can also be deduced from \eqref{Sulanke2FB} using the Laplace method, and presumably also from the generating function \eqref{MGF};
 analytic techniques in \citet{flajolet09analytic} are likely to  imply a stronger  local limit law of the Gaussian type.

\subsection{A probabilistic approach for Theorem \ref{T1}}\label{sec:proba}
Here we sketch a probabilistic proof for Theorem \ref{T1}.
Let $$J_n(t)=\floor{nt}-L_n(t)=A_n(t)+D_n(t)$$ denote the number of non-level steps.
Since %
asymptotically
only $1/3$ of the steps of a random walk are horizontal,
we can expect that  $J_n(1)/n\to 2/3$ in probability,  and it is natural to expect that
$\frac1{\sqrt{n}}(J_n(t)-2nt/3)_{t\in[0,1]}$ converges to  $\frac{\sqrt{2}}{3}(B_t)_{t\in[0,1]}$, the Brownian motion scaled by the standard deviation of a Bernoulli random variable with probability of success
$p=2/3$.
Conditionally on $J_n$,  $A_n(t)-D_n(t)=2A_n(t)-J_n(t)$ behaves like a Dyck path on $J_n(1)$ sites,
 which by \citet{kaigh76invariance} converges to the Brownian excursion. So we expect that
$$
\left(\frac{A_n(t)-J_n(t)/2}{\sqrt{J_n(1)}}\right)_{t\in[0,1]}\toD \frac{1}{2} (B_t^{ex})_{t\in[0,1]}.
$$
We can then decompose the process into
$$
\frac{A_n(t)-nt/3}{\sqrt{2n}}=\frac{A_n(t)-J_n(t)/2}{\sqrt{J_n(1)}}\sqrt{\frac{J_n(1)}{2n}}+\frac{J_n(t)-2nt/3}{2\sqrt{2n}}%
,\quad t\in[0,1].
$$
One can show that the two processes on the right-hand side above converge to $\frac1{2\sqrt 3}B^{ex}$ and $\frac 16B$, respectively,
and furthermore the limit of the first
term
 is independent from $(J_n(1))_{n\in\mathbb N}$ as $n\to\infty$. Therefore, the
two limit  processes %
corresponding to
right-hand side above are independent.

\subsection*{Acknowledgement}
  The authors thank an anonymous referee for the careful reading of the manuscript.
  WB's research was supported in part by the Charles Phelps Taft Research Center at the University of Cincinnati. He thanks Jacek Weso\l owski for helpful discussions.
YW's research was supported in part by NSA grant H98230-16-1-0322 and Army Research Laboratory grant W911NF-17-1-0006.

\appendix
\section{Proof of Lemma \ref{L:free-Wick}}\label{A:free-Wick}
 Process $(Z_t)_{t\ge0}$ %
 (\eqref{Z-univ} and \eqref{Z-trans})
 comes from
 \citep[Example 4.9]{bozejko97qGaussian}, see also
 \citep[Example 5.3]{biane98processes}, so Lemma \ref{L:free-Wick} follows from some facts
 from
  free probability. A convenient framework for free probability is the so called
$W^*$-probability space $(\mathcal{A},\tau)$ where $\tau$ is a faithful normal trace on the von Neumann algebra $\mathcal{A}$.
The semicircular  family  %
$(\mathbb X_t)_{t\ge 0}$
 is the set of self-adjoint elements of $\mathcal{A}$ such
that for any choice of $t_1,\dots,t_d\geq 0$ the joint moments are \begin{equation}   \label{mom-free-circ}
\tau(\mathbb{X}_{t_1}\mathbb{X}_{t_2}%
\cdots
 \mathbb{X}_{t_d}) =\sum_{\pi\in
NC_2(d)}\prod_{\{i,j\}\in\pi}\tau(\mathbb{X}_{t_i}\mathbb{X}_{t_j}),
\end{equation}  see \citet[Definition
8.15]{nica06lectures}. The free Brownian motion %
(in free probability)
 is a semicircular family such that
$\tau(\mathbb{X}_{s}\mathbb{X}_{t})=\min\{s,t\}$.

 \citet[page 144]{biane98processes} and %
 Bo\.zejko, K\"ummerer and Speicher
  \citep[Definition 4.1 and Corollary 4.5]{bozejko97qGaussian}  %
  showed
     that for bounded Borel functions $f_1,\dots,f_d$,
the joint moments $E(f_1(Z_{t_1})f_2(Z_{t_2})%
\cdots
 f_d(Z_{t_d}))$  for  $t_1\leq t_2\leq\dots\leq t_d$ coincide with the
corresponding joint moments $\tau(f_1(\mathbb{X}_{t_1})f_2(\mathbb{X}_{t_2})%
\cdots
 f_d(\mathbb{X}_{t_d}))$ of the
 free Brownian motion.
 (For this reason, the process $(Z_t)_{t\ge 0}$ is also referred to as the free Brownian motion, understood as an inhomogeneous Markov process in classical probability theory.)
 Since $|Z_t|\leq 2 \sqrt{t}$ and $\|\mathbb{X}_{t}\|\leq 2\sqrt{t}$, taking  $f_j(x)=x 1_{\ccbb{|x|<2\sqrt{t_j}}}$ we see that
$f_j(Z_{t_j}) =Z_{t_j}$ and  $f_j(\mathbb{X}_{t_j})=\mathbb{X}_{t_j}$, so
 the joint moments on the %
 left-hand side of \eqref{free-Wick}
 coincide with the joint moments of the  semi-circular elements with covariance $\min\{s,t\}$. Thus the right hand side of \eqref{mom-free-circ}  gives the right hand side of \eqref{free-Wick}.

\def\cprime{$'$} \def\polhk#1{\setbox0=\hbox{#1}{\ooalign{\hidewidth
  \lower1.5ex\hbox{`}\hidewidth\crcr\unhbox0}}}
  \def\polhk#1{\setbox0=\hbox{#1}{\ooalign{\hidewidth
  \lower1.5ex\hbox{`}\hidewidth\crcr\unhbox0}}}

\end{document}